\newtheorem{theorem}{Theorem}[section]
\newtheorem{corollary}[theorem]{Corollary}
\newtheorem{lemma}[theorem]{Lemma}
\newtheorem{proposition}[theorem]{Proposition}
\theoremstyle{definition}
\newtheorem{definition}[theorem]{Definition}
\newtheorem{remark}[theorem]{Remark}
\title[Lower Bounds for the first eigenvalue of the Laplacian]{Lower Bounds for the first eigenvalue of the Laplacian with Dirichlet Boundary Conditions in a Hyperbolic Space of a Negative Constant Curvature}
\author{Sergei Artamoshin}
\thanks{The author wants to thank professor J\a'ozef Dodziuk for being a scientific advisor, for his interest to this paper and for his patience.}
\begin{document}

\begin{abstract}
 In this paper we consider a domain in a space of negative constant sectional curvature. Such assumption about the sectional curvature let us develop a new technique and improve existing lower bounds of eigenvalues from Dirichlet eigenvalue problem, obtained by Alessandro Savo in 2009.
\end{abstract}

\maketitle

\begin{spacing}{1.5}

\section{Introduction}

In this paper we compute lower bounds for the smallest positive eigenvalue of a Dirichlet Eigenvalue Problem in a domain of a constant negative curvature. Such estimates for riemannian manifolds have been discussed in many papers. For the bibliography, see, for example, \cite{Chavel} or \cite{Cheng}. Relatively recent result was obtained by Alessandro Savo in \cite{Savo}. Here we are going to introduce a new method to analyze eigenvalues and improve the existing estimations in a space of a constant sectional curvature. The Rayleigh's Theorem in \cite{Chavel}, p.~16, let us reduce the discussion to the estimation of lower bounds for the smallest positive eigenvalue in the smallest circumscribed disc. Recall that the Dirichelt Eigenvalue Problem for a disc of radius $\delta$ is formulated as follows. Find all $\lambda\in\mathbb{R}$ and corresponding functions $\varphi_\lambda(\upsilon, r)$, which are eigenfunctions of Hyperbolic Laplacian in $(k+1)-$dimensional hyperbolic space with a constant sectional curvature $\kappa=-1/\rho^2$ such that
\begin{equation}\label{Dirichlet_Original}
\left\{
  \begin{array}{ll}
     & \hbox{$\triangle\varphi_\lambda(\upsilon,r)+\lambda \varphi_\lambda(\upsilon,r)=0 \quad \forall r\in [0,\delta], \,\,\lambda - \text{real}$;} \\
     & \hbox{$\varphi_\lambda(\upsilon, \delta)=0$,}
  \end{array}
\right.
\end{equation}
where $\upsilon$ is a point on the unit sphere $\sigma_O^k$ centered at the origin. Below are the basic estimates and results obtained for a hyperbolic disc in this paper.

\begin{enumerate}
  \item    H.P. McKean showed that
\begin{equation}\label{Mckean_Estimation}
		\lambda\geq -\frac{\kappa k^2}{4}\quad\text{for all}\quad \delta>0\,,
\end{equation}
see \cite{McKean} or \cite{Chavel} (p.46). From \cite{Chavel} (p.46) we also know B.Randol's result stating that
\begin{equation}
\lim\limits_{\delta\rightarrow +\infty}\lambda(\delta)=-\frac{\kappa k^2}{4}\,.
\end{equation}
In 2009 Alessandro Savo obtained the following estimates for $\kappa=-1$.
\begin{equation}\label{Savo_Inequality}
    \frac{k^2}{4}+\frac{\pi^2}{\delta^2}-\frac{4\pi^2}{k\delta^3}\leq\lambda\leq  \frac{k^2}{4}+\frac{\pi^2}{\delta^2}+\frac{C}{\delta^3}\,,
\end{equation}
where $C=\frac{\pi^2(k^2+2k)}{2}\int\limits_0^\infty\frac{r^2}{\sinh^2r}dr$ and for $k=2$,
\begin{equation}\label{savo_equality}
    \lambda(\delta)=-\kappa+\frac{\pi^2}{\delta^2}\,,
\end{equation}
see \cite{Savo}, p.60, Theorem 5.6.
In this paper we shall see that the smallest eigenvalue $\lambda$ in problem \eqref{Dirichlet_Original} for $k=1$ must satisfy the following inequalities
\begin{equation}\label{Upper_Bound_Intro}
    -\kappa\frac{k^2}{4}+\left(\frac{\pi}{2\delta}\right)^2<\lambda_{\min}<
    -\kappa\frac{k^2}{4}+\left(\frac{\pi}{\delta}\right)^2 \,.
\end{equation}
For $k\geq3$ the lower bound in \eqref{Upper_Bound_Intro} can be improved to
\begin{equation}\label{Intro-4-3}
    \frac{-\kappa k^2}{4}+\left(\frac{\pi}{\delta}\right)^2<\lambda_{\min}\,
\end{equation}
and for $k=2$, the new technique presented in this paper also yields \eqref{savo_equality}. Note that \eqref{Intro-4-3} is the improvement of Savo's lower bound in \eqref{Savo_Inequality} for $k>2$ and for all $\delta$. The lower bound in  \eqref{Upper_Bound_Intro} yields a better estimate than the lower bound in \eqref{Savo_Inequality} for $k=1$ and for small $\delta$.

Note also that for $k=1$, the upper bound in \eqref{Upper_Bound_Intro} is stronger than the upper bound in \eqref{Savo_Inequality} as well as it is stronger than the upper bound obtained by S.Y. Cheng, see \cite{Cheng} or \cite{Chavel} (p.82), but still, remains weaker then the upper bound obtained by M. Gage, see \cite{Gage} or \cite{Chavel} (p. 80).

  \item We shall see in Theorem~\ref{Lower-and-Upper-bound-Theorem}, p.~\pageref{Lower-and-Upper-bound-Theorem} that in the hyperbolic space of three dimensions all the radial Dirichlet eigenfunctions together with their eigenvalues can be computed explicitly, i.e., the set of the following formulae

\begin{equation}\label{Intro-6}
    \lambda_j=-\kappa+\left(\frac{\pi j}{\delta}\right)^2\,,\,\,j=1,2,...
\end{equation}
yields the whole spectrum for the Dirichlet eigenvalue problem \eqref{Dirichlet_Original} restricted by a non-zero condition at the origin. The radial eigenfunction assuming the value 1 at the origin for each $\lambda_j$ can be written as

\begin{equation}\label{Intro-7}
    \varphi_{\lambda_j}(r)=\frac{\delta(\rho^2-\eta^2)}{2\pi\rho^2 \eta j}\cdot\sin\left(\frac{\pi jr}{\delta}\right)=
    \frac{\delta\sin(\pi jr/\delta)}{\pi j\rho\sinh(r/\rho)}\,,
\end{equation}
where $0\leq r\leq\delta<\infty$ and $\eta=\rho\tanh(r/2\rho)$. The last too equations \eqref{Intro-6} and \eqref{Intro-7} can be obtained by solving a proper ODE explicitly, but we are going to use a different technique developed in the paper.

\end{enumerate}

\section{Statement of results}

In this section we state the lower and the upper bounds for the minimal positive eigenvalue of a Dirichlet Eigenvalue Problem stated in a connected compact domain $\overline{M^n}\subseteq H^n$ with $\partial M^n\neq\emptyset$. According to~\cite{Chavel}, p.~8, the Dirichlet Eigenvalue Problem for $\overline{M^n}$ is stated as follows.

\textbf{\underline{Dirichlet Eigenvalue Problem:}}\label{Dirichlet-Eigen-General-Chavel} Let $M^n$ be relatively compact and connected domain with smooth boundary $\partial M^n\neq\emptyset$ and $\lambda_{\min}$ denotes the minimal eigenvalue. We are looking for all real numbers $\lambda$ for which there exists a nontrivial solution $\varphi\in C^2(M^n)\cap C^0(\overline{M^n})$ satisfying the following system of equations.
\begin{equation}
    \triangle\varphi+\lambda\varphi=0\quad\text{and}\quad\left.\varphi\right|_{\partial M}=0\,.
\end{equation}

\begin{theorem}\label{Arbitrary-Domain-Estimation-Thm}
    Let $M^n$ and $\lambda\in\mathbb{R}$ be as defined in the Dirihlet Eigenvalue Problem.
    Let $D^n_1$ and $D^n_2$ be two disks in $\mathbb{H}^n$ such that
    \begin{equation}\label{Disc-Manifold-Inclusion-Relationship}
        D^n_1\subseteq M^n\subseteq D^n_2
    \end{equation}
    and let $d_1$ and $d_2$ be the diameters of $D^n_1$ and $D^n_2$ respectively. Then
    \begin{description}
      \item[(A)] For $n=2$
        \begin{equation}
            -\frac\kappa4+\left(\frac{\pi}{d_2}\right)^2\leq\lambda_{\min}(M^2)\leq
            -\frac\kappa4+\left(\frac{2\pi}{d_1}\right)^2\,.
        \end{equation}
      \item[(B)] For $n=3$
        \begin{equation}
            -\kappa+\left(\frac{2\pi}{d_2}\right)^2\leq\lambda_{\min}(M^3)\leq
            -\kappa+\left(\frac{2\pi}{d_1}\right)^2\,.
        \end{equation}
      \item[(C)] For $n>3$
        \begin{equation}
            -\kappa\frac{(n-1)^2}{4}+\left(\frac{2\pi}{d_2}\right)^2\leq\lambda_{\min}(M^n)\,.
        \end{equation}
    \end{description}
\end{theorem}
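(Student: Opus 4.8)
The plan is to reduce the estimate on the arbitrary domain $M^n$ to the already-established estimates on geodesic discs, using the domain monotonicity of the first Dirichlet eigenvalue. This monotonicity is exactly the Rayleigh's Theorem cited in the Introduction (\cite{Chavel}, p.~16): the variational characterization of $\lambda_{\min}(\Omega)$ as the infimum of the Rayleigh quotient $\int_\Omega|\nabla\varphi|^2\big/\int_\Omega\varphi^2$ over admissible $\varphi$ shows that extending a test function on a smaller domain by zero produces an admissible test function on a larger domain with the same Rayleigh quotient. Hence $\Omega_1\subseteq\Omega_2$ forces $\lambda_{\min}(\Omega_1)\geq\lambda_{\min}(\Omega_2)$.

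First I would apply this to the chain $D^n_1\subseteq M^n\subseteq D^n_2$ in \eqref{Disc-Manifold-Inclusion-Relationship} to obtain
\begin{equation}
    \lambda_{\min}(D^n_2)\leq\lambda_{\min}(M^n)\leq\lambda_{\min}(D^n_1)\,.
\end{equation}
Thus the lower bound for $\lambda_{\min}(M^n)$ must come from a lower bound for the first eigenvalue of the larger disc $D^n_2$, and the upper bound from an upper bound for the smaller disc $D^n_1$.

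Next I would insert the disc estimates recorded in the Introduction, writing each disc's diameter as $d=2\delta$, so that a disc of diameter $d$ has radius $\delta=d/2$. For $n=2$ (that is, $k=1$), the two-sided disc bound \eqref{Upper_Bound_Intro}, evaluated at $\delta=d_2/2$ in its lower part and at $\delta=d_1/2$ in its upper part, turns the substitutions $(\pi/2\delta)^2=(\pi/d_2)^2$ and $(\pi/\delta)^2=(2\pi/d_1)^2$ into exactly part \textbf{(A)}. For $n=3$ (that is, $k=2$), the exact radial spectrum \eqref{Intro-6} (equivalently \eqref{savo_equality}) gives $\lambda_{\min}$ of a disc of diameter $d$ equal to $-\kappa+(2\pi/d)^2$ precisely, so feeding the values for $d_2$ and $d_1$ into the monotonicity chain produces the two genuine inequalities of part \textbf{(B)}. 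For $n>3$ (that is, $k=n-1\geq3$), only the lower disc bound \eqref{Intro-4-3} is available; applied to $D^n_2$ at $\delta=d_2/2$ with $k=n-1$ it yields part \textbf{(C)}, which is why no companion upper bound is asserted.

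The main obstacle is not located in this theorem but in the disc estimates it invokes: establishing \eqref{Upper_Bound_Intro}, \eqref{Intro-4-3}, and the exact three-dimensional spectrum \eqref{Intro-6} is the technical core of the paper. Granting those, the only points here requiring care are the diameter-versus-radius bookkeeping above and the justification that the zero-extension used in the monotonicity step genuinely lies in the admissible class for the larger domain, which is standard for the smooth, relatively compact domains assumed in the Dirichlet Eigenvalue Problem.
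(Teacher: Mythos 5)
Your proposal is correct and follows essentially the same route as the paper: the paper likewise invokes Rayleigh's Theorem to obtain $\lambda_{\min}(D^n_2)\leq\lambda_{\min}(M^n)\leq\lambda_{\min}(D^n_1)$ and then substitutes the disc bounds of Theorem~\ref{Lower-and-Upper-bound-Theorem} with $\delta=d/2$. The only cosmetic difference is that you spell out the zero-extension argument behind domain monotonicity, which the paper leaves to the citation of Chavel.
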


\begin{remark} The proof of the theorem will be split into two steps. First, we develop technique and prove the theorem under the assumption that $M^n=D^n(\delta)\subseteq B^n_\rho$, where $B^n_\rho$ is the ball model of $n$-dimensional hyperbolic space with a constant sectional curvature $\kappa=-1/\rho^2$ and $D^n(\delta)$ is a hyperbolic disc of radius $\delta$ centered at the origin of $B^n_\rho$ (see theorem~\ref{Lower-and-Upper-bound-Theorem}, page ~\pageref{Lower-and-Upper-bound-Theorem}). The final step is to obtain theorem~\ref{Arbitrary-Domain-Estimation-Thm} as the consequence of theorem ~\ref{Lower-and-Upper-bound-Theorem} and Rayleigh's theorem (see page~\pageref{Proof-of-Arbitrary-Estimation} for the proof of theorem~\ref{Arbitrary-Domain-Estimation-Thm}). The next three sections develop the technique necessary for the first step.
\end{remark}


\section{Elementary Geometry Preliminaries}

In this section we introduce the basic tools used in the derivation of the results stated in the introduction and in the previous section. First, we define a function $\omega(x,y)$ that will be used to build all possible radial eigenfunctions. Let $x,y\in\mathbb{R}^{k+1}$ and assume that $|x|\neq|y|$. Define
\begin{equation}\label{definition of omega}
	\omega(x,y)=\left|\frac{|x|^2-|y|^2}{|x-y|^2}\right|\,.
\end{equation}
Observe that for $k=2$, $\omega(x,y)$ turns into the two dimensional Poisson kernel used to solve Dirichlet problem in a disk. Hermann Schwarz, while studying complex analysis, introduced the geometric interpretation of $\omega$ for the case $|x|<|y|$, see \cite{Schwarz} (pp. 359-361) or \cite{Ahlfors} (p. 168). Below we shall see the general version of this geometric interpretation.

\subsection {Geometric Interpretation of $\omega(x,y)$.}

\

Let $x,y\in\mathbb{R}^{k+1}$ and $|x|\neq|y|$. Let $S^k(R)$ denotes the $k$-dimensional sphere of radius $R$ centered at the origin $O$. Then, we define $x^*$ and $y^*$ as follows. If the line defined by $x$ and $y$ is tangent to $S^k(|x|)$, then $x^*=x$. Otherwise, $x^*$ be the point of $S^k(|x|)$ such that $x^*\neq x$ and $x^*, x, y$ are collinear. Similarly, if the line through $x,y$ is tangent to $S^k(|y|)$, then $y^*=y$. Otherwise, $y^*$ be the point of $S^k(|y|)$ such that $y^*\neq y$ and $y^*, x, y$ are collinear.

\begin{proposition}
\begin{equation}\label{GeoInte-2}
    |x-y^*|=|x^*-y|\,.
\end{equation}
\end{proposition}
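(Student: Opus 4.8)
The plan is to reduce the whole statement to the elementary power-of-a-point relation, applied to the two concentric spheres $S^k(|x|)$ and $S^k(|y|)$ along the common secant line $\ell$ through $x$ and $y$. Since $|x|\neq|y|$ forces $x\neq y$, the line $\ell$ is well defined, and by construction all four points $x,x^*,y,y^*$ lie on it. Because $\ell$ is a line in $\mathbb{R}^{k+1}$, the problem collapses to one-dimensional bookkeeping: I would fix a signed linear coordinate $t$ along $\ell$ and record the coordinates $a,a^*,b,b^*$ of $x,x^*,y,y^*$, so that the desired equality $|x-y^*|=|x^*-y|$ becomes the scalar claim $|a-b^*|=|a^*-b|$.

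The two inputs are the signed powers of a point. First I would compute the power of $y$ with respect to $S^k(|x|)$: the secant $\ell$ meets this sphere precisely in $x$ and $x^*$ (tangentially at $x=x^*$ in the degenerate case), so
\begin{equation*}
\overrightarrow{yx}\cdot\overrightarrow{yx^*}=(a-b)(a^*-b)=|y|^2-|x|^2 .
\end{equation*}
Symmetrically, the power of $x$ with respect to $S^k(|y|)$, whose intersection with $\ell$ is $\{y,y^*\}$, gives
\begin{equation*}
\overrightarrow{xy}\cdot\overrightarrow{xy^*}=(b-a)(b^*-a)=|x|^2-|y|^2 .
\end{equation*}

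Writing $D=|x|^2-|y|^2$ and using $a\neq b$, these two identities rearrange to $a^*-b=-D/(a-b)$ and $a-b^*=D/(a-b)$, whence $a^*-b=-(a-b^*)$; the two signed lengths therefore have equal absolute value, which is exactly $|x^*-y|=|x-y^*|$, and the rest is algebra. The only genuinely delicate point is establishing the power-of-a-point identities in the precise signed form above, together with the degenerate configurations (the secant tangent to one sphere, or $\ell$ passing through the origin); I expect this to be the main obstacle. I would handle it uniformly by substituting the parametrization $P(t)=y+t\,v$ (with $v$ a unit direction of $\ell$) into $|P(t)|^2=|x|^2$, reading off from the resulting quadratic $t^2+2t\,(y\cdot v)+(|y|^2-|x|^2)=0$ that the product of its roots equals $|y|^2-|x|^2$, and likewise for the other sphere. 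This substitution makes the signs automatic and covers the tangent and through-the-origin cases without any separate argument.
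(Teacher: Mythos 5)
Your argument is correct, but it takes a genuinely different route from the paper. The paper disposes of the proposition in three lines by a symmetry argument: the reflection across the $k$-dimensional hyperplane through the origin orthogonal to the line $(x,y)$ preserves both spheres $S^k(|x|)$ and $S^k(|y|)$ and maps the line to itself, hence sends $x\mapsto x^*$ and $y\mapsto y^*$ simultaneously, so the segment $xy^*$ is carried isometrically onto $x^*y$. You instead run the power-of-a-point relation twice, once for $y$ against $S^k(|x|)$ and once for $x$ against $S^k(|y|)$, and extract the signed identity $a^*-b=-(a-b^*)$ from the two quadratics. Both proofs are complete; your treatment of the tangent and through-the-origin configurations via the product of roots of $t^2+2t\,(y\cdot v)+(|y|^2-|x|^2)=0$ is sound and genuinely covers the degenerate cases without case analysis. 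The reflection argument is shorter and exhibits the single isometry responsible for the coincidence; your computation is heavier but delivers a sharper signed conclusion and, as a by-product, the identity $lq=\bigl||x|^2-|y|^2\bigr|$, which is precisely what the paper has to re-derive separately (via the tangent chord and similar triangles in \eqref{GeoInte-5}--\eqref{GeoInte-6}) in the proof of the Geometric Interpretation Theorem~\ref{Geometric-Interpretation-theorem}. So your route front-loads slightly more work here but would shorten the next proof.
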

\begin{proof}
Consider the $k$-dimensional plane passing though the origin and orthogonal to the line($x,y$). Clearly, $x^*$ is the reflection of $x$ with respect to this plane. For the same reason, $y^*$ is the reflection of $y$, and therefore, segment $x^*y$ is the reflection of $xy^*$. Hence, $|x-y^*|=|x^*-y|$.
\end{proof}

\bigskip

Now, using the proposition above, we introduce the following notation:
\begin{equation}\label{GeoInte-3}
    q=|x-y|\quad\text{and}\quad l=|x-y^*|=|x^*-y|\,.
\end{equation}

\begin{theorem}[Geometric Interpretation]\label{Geometric-Interpretation-theorem}
If $\omega(x,y)$ is defined as in \eqref{definition of omega}, then
\begin{equation}\label{GeoInte-4}
    \omega(x,y)=\frac{l}{q}\,.
\end{equation}
The last expression will be referred to as the Geometric Interpretation of $\omega$.
\end{theorem}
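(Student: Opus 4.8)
The plan is to reduce the claimed identity $\omega(x,y) = l/q$ to a single product formula and then verify that formula by intersecting the line through $x$ and $y$ with a sphere centered at the origin. Since $q = |x-y|$, the definition gives $\omega(x,y) = \bigl||x|^2 - |y|^2\bigr| / q^2$, so the asserted equality $\omega(x,y) = l/q$ is equivalent to
\begin{equation*}
    q\,l = \bigl| |x|^2 - |y|^2 \bigr|.
\end{equation*}
This is precisely the classical power-of-a-point relation: $\bigl||x|^2 - |y|^2\bigr|$ is the absolute power of the point $x$ with respect to the sphere $S^k(|y|)$, and $q$, $l$ are the distances from $x$ to the two points where the line through $x,y$ meets that sphere.

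To make this self-contained I would set $u = (y-x)/|x-y|$ and parametrize the line by $P(t) = x + t\,u$, so that $y$ corresponds to the parameter $t = q$. Substituting $P(t)$ into the equation $|P(t)|^2 = |y|^2$ of $S^k(|y|)$ and using $|u| = 1$ produces the quadratic
\begin{equation*}
    t^2 + 2\langle x, u\rangle\, t + \bigl( |x|^2 - |y|^2 \bigr) = 0,
\end{equation*}
whose two roots $t_1, t_2$ are the parameters of the two intersection points $y$ and $y^*$. Because $u$ is a unit vector, $|P(t_i) - x| = |t_i|$; thus one root has absolute value $q$ and the other has absolute value $l = |x - y^*|$. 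Vieta's formula then reads $t_1 t_2 = |x|^2 - |y|^2$, and taking absolute values gives exactly $q\,l = \bigl||x|^2 - |y|^2\bigr|$. Dividing by $q^2$ yields $\omega(x,y) = l/q$.

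By the preceding Proposition we may also write $l = |x^* - y|$, so running the same argument with the sphere $S^k(|x|)$ and the point $y$ produces an identical conclusion; this serves as a consistency check on the sign conventions. I expect no genuine obstacle here: the only points requiring care are the degenerate configurations. When the line is tangent to $S^k(|y|)$ the quadratic has a double root, which forces $y^* = y$ and hence $l = q$, and this is automatically correct since tangency makes $\bigl||x|^2 - |y|^2\bigr| = q^2$. Finally, since $x$, $y$, and the origin always span a common two-plane, the entire computation is planar and the ambient dimension $k$ is irrelevant; the hypothesis $|x| \neq |y|$ ensures the power is nonzero and the configuration nondegenerate.
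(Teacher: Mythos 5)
Your argument is correct, but it reaches the crucial identity $q\,l=\bigl|\,|x|^2-|y|^2\bigr|$ by a genuinely different route from the paper. The paper proceeds synthetically: it draws the chord $MP$ of the larger sphere that is tangent to the smaller sphere $S^k(|x|)$ at $x$, so that $x$ is the midpoint of $MP$ and the Pythagorean theorem gives $\bigl|\,|x|^2-|y|^2\bigr|=a^2$ with $a=|Mx|$; it then gets $a^2=lq$ from the similarity of $\triangle y^*xM$ and $\triangle Pxy$, i.e.\ the intersecting--chords form of the power of a point, and finally must remark separately that ``the same argument applies if $S^k(|y|)$ is the smaller one.'' You instead parametrize the line through $x$ and $y$ by a unit vector, intersect it with $S^k(|y|)$, and read off the signed product of the roots $t_1t_2=|x|^2-|y|^2$ from Vieta; taking absolute values gives $ql=|t_1|\,|t_2|=\bigl|\,|x|^2-|y|^2\bigr|$ whether $x$ lies inside or outside $S^k(|y|)$, so no case distinction on which sphere is smaller is needed, the tangent configuration is absorbed as a double root, and the reduction to the plane spanned by $O$, $x$, $y$ is automatic. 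What the paper's proof buys is brevity and a classical picture once the figure is accepted; what yours buys is a self-contained, coordinate-based verification that handles all sign and degeneracy cases uniformly. The only point I would make explicit in your write-up is the passage from the signed Vieta product to the unsigned product of distances, which you do implicitly when you take absolute values.
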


\begin{proof} Figure \ref{Geometric_Interpretation} below represents the two dimensional plane defined by three points: $O,x,y$. The segment $MP$ is the tangent chord to the smaller sphere, say, $S^k(|x|)$ at point $x$. Then, clearly, $a=|Mx|=|xP|$ is one half of the length of the chord. Pythagorean theorem implies that

\begin{figure}[h]
  \center\epsfig{figure=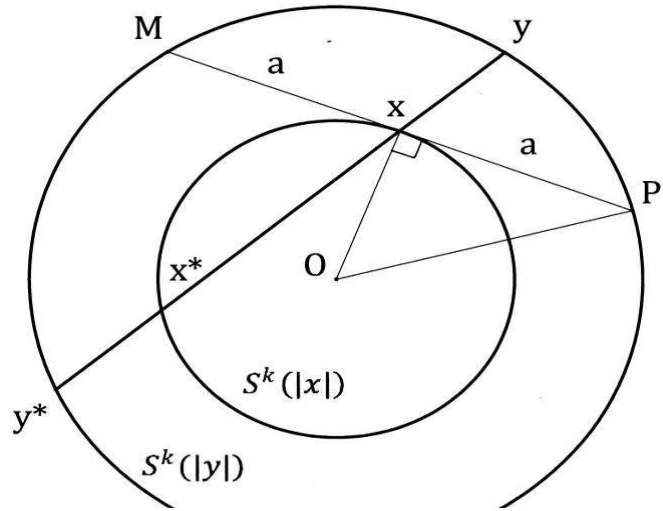, height=7cm, width=9cm}
  \caption{Geometric Interpretation}
  \label{Geometric_Interpretation}
\end{figure}

\begin{equation}\label{GeoInte-5}
    ||x|^2-|y|^2|=a^2\,.
\end{equation}
In addition, $\triangle y^*xM$ is similar to $\triangle Pxy$, which yields
\begin{equation}\label{GeoInte-6}
    a^2=|y^*-x|\cdot |x-y|=lq\,.
\end{equation}
Combining \eqref{GeoInte-5} and \eqref{GeoInte-6}, we may write that
\begin{equation}\label{GeoInte-7}
    \omega(x,y)=\frac{||x|^2-|y|^2|}{|x-y|^2}=\frac{a^2}{q^2}=\frac{lq}{q^2}=\frac{l}{q}\,.
\end{equation}
The same argument applies if the sphere $S^k(|y|)$ is the smaller one. This completes the proof of Theorem~\ref{Geometric-Interpretation-theorem}.
\end{proof}

\subsection {Sphere exchange rule.} \ 

The following lemma describes an important rule that can be used instead of successive application of an inversion and a dilation to integrate over spheres a function that depends only on the distance.

\begin{lemma}[Sphere exchange rule]\label{Basic_lemma}

Let $S^k(r)$ and $S^k(R)$ be
two $k$-dimensional spheres of radii $r$ and $R$ respectively. Let $x,x_1,y,y_1\in \mathbb{R}^{k+1}$ be arbitrary points satisfying $|x|=|x_1|=r$ and $|y|=|y_1|=R$. We assume that $x,y$ are fixed, while $x_1,y_1$ are parameters of integration. If $g:\mathbb{R}\rightarrow \mathbf{C}$ is an integrable complex-valued function on
$[|R-r|,|R+r|]$, then
\begin{equation}\label{ExchaSphere-1}
    R\,^k\cdot \int\limits_{S^k(r)}
    g(|x_1-y|)\,d\,S_{x_1}=r^k\cdot \int\limits_{S^k(R)}
    g(|x-y_1|)\,d\,S_{y_1}\,.
\end{equation}
As a consequence,
\begin{equation}\label{ExchaSphere-2}
    R\,^k\cdot \int\limits_{S^k(r)} g\circ \omega(x_1,y)
    d\,S_{x_1}=r^k\cdot \int\limits_{S^k(R)}
    g\circ\omega(x,y_1)d\,S_{y_1}\,,
\end{equation}
where $g\circ\omega$ denotes the composition of $g$ and $\omega$.


\end{lemma}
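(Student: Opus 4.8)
The plan is to reduce both sides of \eqref{ExchaSphere-1} to one and the same one-dimensional integral in a polar angle, exploiting the rotational invariance of the spheres and the fact that $|x_1-y|$ depends on $x_1$ only through the angle it makes with the fixed point $y$. First I would fix $y$ with $|y|=R$ and take the ray through $y$ as the polar axis. For $x_1\in S^k(r)$, letting $\theta\in[0,\pi]$ denote the angle between $x_1$ and $y$, the law of cosines gives
\[
|x_1-y|^2=r^2+R^2-2rR\cos\theta,
\]
so $g(|x_1-y|)$ is a \emph{zonal} function, depending on $x_1$ only through $\theta$. Writing the surface measure on $S^k(r)$ in polar coordinates about this axis, the latitude at angle $\theta$ is a $(k-1)$-sphere of radius $r\sin\theta$, whence
\[
dS_{x_1}=r^k(\sin\theta)^{k-1}\,d\theta\,d\Omega_{k-1},
\]
with $d\Omega_{k-1}$ the measure on the unit $(k-1)$-sphere, which integrates to its total mass $\omega_{k-1}$.

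Next I would introduce the angular integral
\[
J(r,R)=\omega_{k-1}\int_0^\pi g\!\left(\sqrt{r^2+R^2-2rR\cos\theta}\right)(\sin\theta)^{k-1}\,d\theta
\]
and observe that the expression under the radical is symmetric in $r$ and $R$, so $J(r,R)=J(R,r)$. The measure decomposition then yields
\[
\int_{S^k(r)}g(|x_1-y|)\,dS_{x_1}=r^k\,J(r,R).
\]
Running the identical argument with the two spheres interchanged (now $x$ with $|x|=r$ is the pole and $y_1$ ranges over $S^k(R)$) gives
\[
\int_{S^k(R)}g(|x-y_1|)\,dS_{y_1}=R^k\,J(R,r)=R^k\,J(r,R).
\]
Multiplying the first identity by $R^k$ and the second by $r^k$ produces $r^k R^k J(r,R)$ on both sides, which is precisely \eqref{ExchaSphere-1}.

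For the consequence \eqref{ExchaSphere-2}, the key remark is that when $|x_1|=r$ and $|y|=R$ are held fixed, definition \eqref{definition of omega} gives $\omega(x_1,y)=|r^2-R^2|/|x_1-y|^2$, a function of the single variable $|x_1-y|$; the same holds for $\omega(x,y_1)$. Hence $g\circ\omega$ is again of the form (integrable function of the pairwise distance), and \eqref{ExchaSphere-2} follows at once by applying \eqref{ExchaSphere-1} to the function $t\mapsto g\!\left(|r^2-R^2|/t^2\right)$ in place of $g$.

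I expect the only delicate point to be the bookkeeping in the measure decomposition, namely verifying the prefactor $r^k$ and the weight $(\sin\theta)^{k-1}$, together with checking that integrability of $g$ on $[|R-r|,|R+r|]$ legitimizes rewriting the surface integrals as $J$ (so that the substitution $u=\cos\theta$ and Tonelli's theorem apply). The geometric heart of the argument, the $r\leftrightarrow R$ symmetry of $r^2+R^2-2rR\cos\theta$, is immediate.
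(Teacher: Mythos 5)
Your proof is correct, but it takes a genuinely different route from the paper. You reduce both surface integrals to the single one-dimensional zonal integral $J(r,R)=\omega_{k-1}\int_0^\pi g\bigl(\sqrt{r^2+R^2-2rR\cos\theta}\,\bigr)(\sin\theta)^{k-1}\,d\theta$ via the polar decomposition $dS_{x_1}=r^k(\sin\theta)^{k-1}\,d\theta\,d\Omega_{k-1}$, and then the identity falls out of the manifest $r\leftrightarrow R$ symmetry of the law-of-cosines expression. The paper instead argues geometrically and coordinate-free: it sets up a radial correspondence $\widetilde{x}\mapsto\widetilde{y}=(R/r)\widetilde{x}$ between the two spheres, notes the congruence $|\widetilde{x}-y_0|=|x_0-\widetilde{y}|$ of the crossed distances, uses the dilation factor $dS_{\widetilde{y}}=(R/r)^k dS_{\widetilde{x}}$ for the change of variables, and invokes invariance of the integrals under rotations fixing the origin to move between the fixed points and the constructed ones. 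Your version is more computational and makes the symmetry completely explicit through a formula, at the cost of the measure bookkeeping you flag (which is indeed the only delicate point, and is standard); the paper's version avoids writing any Jacobian in angular coordinates and stays within the elementary-geometry framework of the section. Your handling of \eqref{ExchaSphere-2} by substituting $t\mapsto g(|r^2-R^2|/t^2)$ matches the paper's, except that the paper additionally records the degenerate case $R=r$ (where $\omega\equiv 0$ off a null set and both sides equal $g(0)$ times the respective volumes); you may want to add that one line for completeness.
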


\begin{proof}[Proof of Lemma \ref{Basic_lemma}] We prove successively all formulae listed in the Lemma.

\begin{proof}[Proof of \eqref{ExchaSphere-1}.]

We fix $y_0\in S^k(R)$ and define $x_0$ as the following intersection:
\begin{equation}\label{ExchaSphere-4}
    x_0=S^k(R)\cap \text{Ray}(Oy_0)\,.
\end{equation}
Then, let $\widetilde{y}\in S^k(R)$ be a variable point and set
\begin{equation}\label{ExchaSphere-5}
    \widetilde{x}=S^k(r)\cap \text{Ray}(O\widetilde{y})\,.
\end{equation}
The figure below shows the plane defined by Ray($O\widetilde{y}$) and Ray($Oy_0$). The points $x$ and $y$ denoted on the picture below need not be on the cross-sectional plane. Clearly, such a construction yields $\triangle O\widetilde{x}y_0$ and $\triangle O\widetilde{y}x_0$ are congruent, and then,
\begin{equation}\label{ExchaSphere-5.1}
    |\widetilde{y}-x_0|=|\widetilde{x}-y_0|\,.
\end{equation}

\begin{figure}[h]
  \center\epsfig{figure=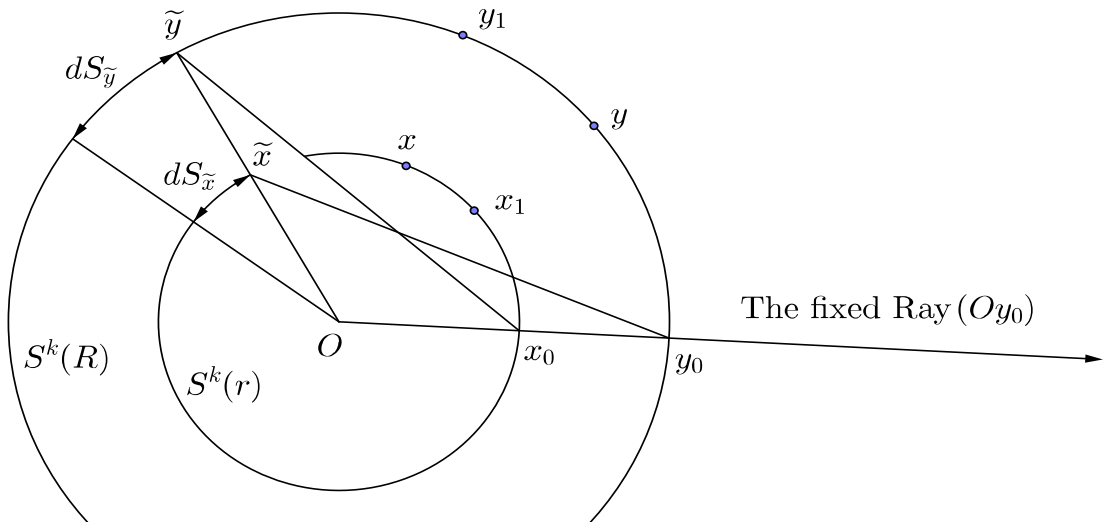, height=7cm, width=15cm}
  \caption{Sphere exchange rule.}\label{Sphere_Exchange}
\end{figure}

\smallskip

Let $dS_{\widetilde{x}}$ be the measure of some spherical infinitesimal neighborhood $U(\widetilde{x})$ around $\widetilde{x}$ and $dS_{\widetilde{y}}$ be the measure of the spherical infinitesimal neighborhood around $\widetilde{y}$ obtained as a dilated image of $U(\widetilde{x})$. This implies that
\begin{equation}\label{ExchaSphere-6}
    dS_{\widetilde{y}}=\left(\frac{R}{r}\right)^k dS_{\widetilde{x}}\,.
\end{equation}
Notice, now, that the function
\begin{equation}\label{ExchaSphere-7}
    F_1(y)=\int\limits_{S^k(r)} g(|x_1-y|)dS_{x_1}
\end{equation}
is invariant under isometries of $\mathbb{R}^{k+1}$ that fix the origin $O$. This is why $F_1(y)=F_1(y_0)$ for all $y,y_0\in S^k(R)$. Then, if we replace in $F_1(y_0)$ the parameter of integration $x_1$ by $\widetilde{x}$, we obtain the following formula
\begin{equation}\label{ExchaSphere-8}
    \int\limits_{S^k(r)} g(|x_1-y|)dS_{x_1}=\int\limits_{S^k(r)} g(|\widetilde{x}-y_0|)dS_{\widetilde{x}}\,.
\end{equation}
Recall that
\begin{equation}\label{ExchaSphere-10}
    |\widetilde{x}-y_0|=|x_0-\widetilde{y}|\quad\text{and}\quad
    dS_{\widetilde{y}}=\left(\frac{R}{r}\right)^k dS_{\widetilde{x}}\,.
\end{equation}
Therefore, by a change of variables and \eqref{ExchaSphere-10},
\begin{equation}\label{ExchaSphere-11}
\begin{split}
    & \int\limits_{S^k(r)} g(|\widetilde{x}-y_0|)dS_{\widetilde{x}}=\left(\frac{r}{R}\right)^k
    \int\limits_{\widetilde{x}\in S^k(r)} g(|x_0-\widetilde{y}|)\left(\frac{R}{r}\right)^k dS_{\widetilde{x}}
    \\& \quad\quad
    =\left(\frac{r}{R}\right)^k
    \int\limits_{\widetilde{x}(\widetilde{y})\in S^k(r)} g(|x_0-\widetilde{y}(\widetilde{x})|) dS_{\widetilde{y}(\widetilde{x})}
    \\& \quad\quad\quad\quad\quad\quad\quad\quad\quad\quad=
    \left(\frac{r}{R}\right)^k
    \int\limits_{\widetilde{y}\in S^k(R)} g(|x_0-\widetilde{y}|)dS_{\widetilde{y}}\,,
\end{split}
\end{equation}
where the last equality follows since
\begin{equation}\label{ExchaSphere-12}
    \widetilde{x}=\widetilde{x}(\widetilde{y})=\frac{r}{R}\cdot \widetilde{y}\in S^k(r)\quad\Leftrightarrow\quad
    \widetilde{y}=\widetilde{y}(\widetilde{x})=\frac{R}{r}\cdot \widetilde{x}\in S^k(R)\,.
\end{equation}
As above, the function
\begin{equation}\label{ExchaSphere-13}
    F_2(x_0)=\int\limits_{S^k(R)} g(|x_0-\widetilde{y}|)dS_{\widetilde{y}}
\end{equation}
is invariant under isometries of $\mathbb{R}^{k+1}$ that fix the origin. Thus,
\begin{equation}\label{ExchaSphere-14}
    \int\limits_{S^k(R)} g(|x_0-\widetilde{y}|)dS_{\widetilde{y}}=
    \int\limits_{S^k(R)} g(|x-\widetilde{y}|)dS_{\widetilde{y}} \quad\forall x,x_0\in S^k(r)\,.
\end{equation}
Finally, gathering all results from the chain \eqref{ExchaSphere-8}, \eqref{ExchaSphere-11}, \eqref{ExchaSphere-14} and changing of notation for the variable of integration, we have
\begin{equation}\label{ExchaSphere-15}
    \int\limits_{S^k(r)} g(|x_1-y|)dS_{x_1}=\left(\frac{r}{R}\right)^k
    \int\limits_{S^k(R)} g(|x-y_1|)dS_{y_1} \,,
\end{equation}
which completes the proof of \eqref{ExchaSphere-1} in Lemma \eqref{Basic_lemma}.
\end{proof}

\medskip

\begin{proof}[Proof of \eqref{ExchaSphere-2}.]

A similar argument is used to prove \eqref{ExchaSphere-2}.

The identity \eqref{ExchaSphere-2} holds, because the function $g\circ
\omega(x,y)=\widetilde{g}(|x-y|)$ is also integrable function of one variable $w=|x-y|\in[|R-r|, |R+r|]$ since \begin{equation}\label{ExchaSphere-16}
    \omega(x,y)=\frac{|R^2-r^2|}{|x-y|^2}
\end{equation}
is continuous as a function of $w=|x-y|$ if $R\neq r$. If $R=r$, observe that $\omega(x,y)\equiv0$ for all $y\neq x$, and then,
\begin{equation}
    \frac{1}{|S^k(r)|}\int\limits_{S^k(r)}g\circ\omega(x_1,y)dS_{x_1}=g(0)
    =\frac{1}{|S^k(R)|}\int\limits_{S^k(R)}g\circ\omega(x,y_1)dS_{y_1}\,.
\end{equation}
Therefore, \eqref{ExchaSphere-2} remains true for $R=r$ as well.
\end{proof}


Therefore, the proof of Lemma \eqref{Basic_lemma} is complete.
\end{proof}


\subsection {A useful property of $l$ and $q$.} \

The next goal is to describe a useful feature of $l$ and $q$ defined above. Recall that if $O$ is the origin, $x, y\in\mathbb{R}^{k+1}$ and $R=|y|>|x|$, then
\begin{equation}\label{Feature-1}
    y^*\in S^k(|y|)\quad\text{such that}\quad y^*,x,y\quad\text{are collinear}\,;
\end{equation}

\begin{equation}\label{Feature-2}
    q(y)=|x-y|\,;\quad l(y)=|x-y^*|\,;\quad\psi=\angle Oxy\,.
\end{equation}
All of the notations are presented on the left Figure~\ref{l,q_exchange} below. Clearly, if $x$ and $R$ are fixed, $l$ and $q$ depend only on $\psi$ since both of the distances $|x-y|$ and $|x-y^*|$ depend only on $\psi, x, R$. Fix some $\psi\in(0,\pi)$. Then we observe that while there is the whole set of points
\begin{equation}\label{Feature-3}
    \overline{y}=\overline{y}(\psi)=\{y\in S^k(R)\, | \,\angle yxO=\psi\}\,,
\end{equation}
we need only one plane defined by $x,O$ and some arbitrary $y\in \overline{y}$ to demonstrate the desired relationship among $l,q$ and $\psi$. This is possible because all the values $l,q,\psi$ are invariant of $y\in \overline{y}$ and can be pictured on the plane passing through $x,O$ and some $y\in \overline{y}(\psi)$. The invariance mentioned implies that $q(\psi)$ and $l(\psi)$ can be defined as follows.
\begin{equation}\label{Feature-5}
    q(\psi)=q(\overline{y}(\psi))=|x-y|\quad\text{for every}\quad y\in \overline{y}\,;
\end{equation}
\begin{equation}\label{Feature-6}
    l(\psi)=l(\overline{y}(\psi))=|x-y^*|\,,
\end{equation}
where
\begin{equation}
   y^*\in S^k(|y|),\,\,\, y\in \overline{y} \quad\text{and}\quad y^*,x,y\quad\text{are collinear}.
\end{equation}
Fix some $y=y(\psi)\in \overline{y}$ and picture $l(\psi), q(\psi)$ on the plane defined by $O,x,y(\psi)$. (See the figure below on the right).
\begin{figure}[h]
  \center\epsfig{figure=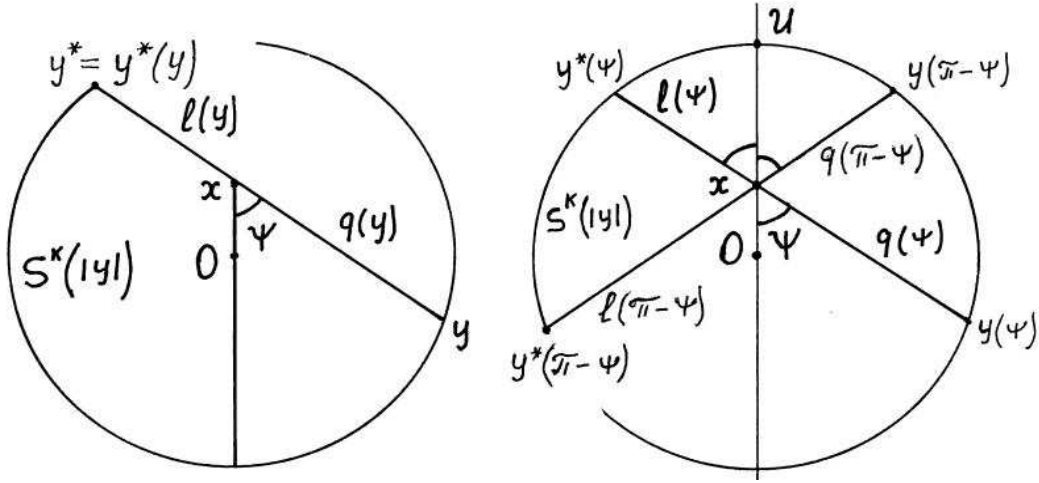, height=6.5cm, width=14cm}
  \caption{Exchanging $l$ and $q$ cumbersome.}\label{l,q_exchange}
\end{figure}

\begin{lemma}[$l, q$ - property]\label{Feature-Feature-lemma}

\begin{equation}\label{Feature-Feature-Lemma-formula}
    l(\psi)=q(\pi-\psi)\quad\text{and}\quad q(\psi)=l(\pi-\psi)\,.
\end{equation}

\end{lemma}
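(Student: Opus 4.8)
The plan is to exploit a single elementary fact, namely that $y$ and $y^*$ are precisely the two points in which the line through $x$ and $y$ meets the sphere $S^k(R)$, together with the angle-only dependence of $l$ and $q$ already recorded in \eqref{Feature-5}--\eqref{Feature-6}. Since $R=|y|>|x|$, the point $x$ lies in the interior of the ball bounded by $S^k(R)$, hence strictly between the two intersection points $y$ and $y^*$ of this line with the sphere. Consequently the rays $xy$ and $xy^*$ are opposite, so that
\begin{equation*}
\angle Oxy^* = \pi - \angle Oxy = \pi - \psi.
\end{equation*}

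First I would record that, by the invariance established in \eqref{Feature-5}--\eqref{Feature-6}, the numbers $q(\psi)$ and $l(\psi)$ depend only on the angle $\psi$, so that $|x-y'|=q(\psi)$ for \emph{every} $y'\in S^k(R)$ with $\angle Oxy'=\psi$. Now $y^*\in S^k(|y|)=S^k(R)$ and $\angle Oxy^*=\pi-\psi$, so $y^*$ is an admissible representative of the angle $\pi-\psi$; therefore
\begin{equation*}
q(\pi-\psi) = |x-y^*| = l(\psi).
\end{equation*}
The reverse identity follows symmetrically: the collinear partner of $y^*$ on $S^k(R)$ is $y$ itself, and $\angle Oxy=\psi=\pi-(\pi-\psi)$, so applying the same reasoning with $y^*$ now playing the role previously played by $y$ yields $l(\pi-\psi)=|x-y|=q(\psi)$. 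Together these give \eqref{Feature-Feature-Lemma-formula}.

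If a purely computational verification is preferred, I would instead reduce to the plane through $O,x,y$ and apply the Law of Cosines in the triangles $Oxy$ and $Oxy^*$. Writing $d=|x|$, the relation $R^2=d^2+q^2-2dq\cos\psi$ has the unique positive root $q(\psi)=d\cos\psi+\sqrt{R^2-d^2\sin^2\psi}$ (the other root is negative because $d<R$), while the triangle $Oxy^*$, whose angle at $x$ equals $\pi-\psi$, gives $l(\psi)=-d\cos\psi+\sqrt{R^2-d^2\sin^2\psi}$. Since $\cos(\pi-\psi)=-\cos\psi$ and $\sin^2(\pi-\psi)=\sin^2\psi$, the substitution $\psi\mapsto\pi-\psi$ interchanges these two expressions, which is exactly \eqref{Feature-Feature-Lemma-formula}.

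The only genuinely delicate point is the claim $\angle Oxy^*=\pi-\psi$; everything else is bookkeeping. This hinges on $x$ being an interior point of the ball of radius $R$, i.e.\ on the standing hypothesis $|x|<R$, which guarantees that $x$ separates $y$ from $y^*$ on their common line. I would state this explicitly, since if $x$ lay on or outside $S^k(R)$ the two intersection points would sit on the same ray from $x$, the angle relation would break down, and with it the whole lemma.
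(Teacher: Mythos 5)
Your argument is correct and is essentially the paper's own proof: both rest on the observation that $x$ lies between $y$ and $y^*$ on their common line (since $|x|<R$), so $\angle Oxy^*=\pi-\psi$, combined with the invariance \eqref{Feature-5}--\eqref{Feature-6} of $l$ and $q$ under the choice of representative at a given angle. Your explicit flagging of the hypothesis $|x|<R$ and the supplementary Law-of-Cosines verification are welcome additions but do not change the route.
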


\begin{proof}[Proof of Lemma \ref{Feature-Feature-lemma}] Using the notation described above and the right picture from the Figure~\ref{l,q_exchange} above, we have the following sequence of implications:
\begin{equation}\label{Feature-10}
\begin{split}
    & \angle y(\psi)xO=\psi\quad \Leftrightarrow\quad \angle y^*(\psi)xU=\psi\quad\Leftrightarrow
    \\& \Leftrightarrow\quad \angle y^*(\psi)xO=\pi-\psi=\angle y(\pi-\psi)xO\quad\Rightarrow
    \\& \Rightarrow\quad|x-y^*(\psi)|=|x-y(\pi-\psi)| \quad\Leftrightarrow\quad l(\psi)=q(\pi-\psi)\,.
\end{split}
\end{equation}
The same argument shows that $q(\psi)=l(\pi-\psi)$. This completes the proof of Lemma~\ref{Feature-Feature-lemma}.
\end{proof}

\begin{corollary} It follows that
\begin{equation}\label{Three_D_Dirichlet-28}
    l\left(\frac{\pi}{2}+\tau\right)=q\left(\frac{\pi}{2}-\tau\right)\quad\text{and}\quad
    l\left(\frac{\pi}{2}-\tau\right)=q\left(\frac{\pi}{2}+\tau\right)\,,
\end{equation}
which yields
\begin{equation}\label{Three_D_Dirichlet-29}
\begin{split}
    & (l+q)\circ\left(\frac{\pi}{2}-\tau\right)=(l+q)\circ\left(\frac{\pi}{2}+\tau\right)
    \\& \text{and}\quad\ln\frac{l}{q}\circ\left(\frac{\pi}{2}-\tau\right)
    =-\ln\frac{l}{q}\circ\left(\frac{\pi}{2}+\tau\right)\,,
\end{split}
\end{equation}
where the symbol $\circ$ denotes the composition of two functions.

\end{corollary}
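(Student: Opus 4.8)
The plan is to obtain both displayed identities directly from Lemma~\ref{Feature-Feature-lemma} by changing the argument, followed by elementary algebra; no new geometric input is needed. Throughout I would restrict to $\tau\in(-\tfrac{\pi}{2},\tfrac{\pi}{2})$ so that both $\tfrac{\pi}{2}+\tau$ and $\tfrac{\pi}{2}-\tau$ lie in $(0,\pi)$, the range on which $l$ and $q$ were defined in \eqref{Feature-5} and \eqref{Feature-6}.

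First I would establish \eqref{Three_D_Dirichlet-28}. Recall from \eqref{Feature-Feature-Lemma-formula} that $l(\psi)=q(\pi-\psi)$ for every $\psi\in(0,\pi)$. Setting $\psi=\tfrac{\pi}{2}+\tau$ gives $\pi-\psi=\tfrac{\pi}{2}-\tau$, hence $l(\tfrac{\pi}{2}+\tau)=q(\tfrac{\pi}{2}-\tau)$, which is the first relation. Setting instead $\psi=\tfrac{\pi}{2}-\tau$ gives $\pi-\psi=\tfrac{\pi}{2}+\tau$, hence $l(\tfrac{\pi}{2}-\tau)=q(\tfrac{\pi}{2}+\tau)$, the second relation. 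Note that the companion identity $q(\psi)=l(\pi-\psi)$ is itself obtained from $l(\psi)=q(\pi-\psi)$ by the substitution $\psi\mapsto\pi-\psi$, so either half of Lemma~\ref{Feature-Feature-lemma} already yields both relations of \eqref{Three_D_Dirichlet-28}.

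Next I would deduce \eqref{Three_D_Dirichlet-29} from \eqref{Three_D_Dirichlet-28} by addition and subtraction. For the sum, adding $q(\tfrac{\pi}{2}-\tau)=l(\tfrac{\pi}{2}+\tau)$ to $l(\tfrac{\pi}{2}-\tau)=q(\tfrac{\pi}{2}+\tau)$ gives $(l+q)(\tfrac{\pi}{2}-\tau)=(l+q)(\tfrac{\pi}{2}+\tau)$. For the logarithmic relation I would write $\ln\frac{l}{q}=\ln l-\ln q$ and substitute \eqref{Three_D_Dirichlet-28} termwise, obtaining $\ln l(\tfrac{\pi}{2}-\tau)-\ln q(\tfrac{\pi}{2}-\tau)=\ln q(\tfrac{\pi}{2}+\tau)-\ln l(\tfrac{\pi}{2}+\tau)=-\bigl(\ln l(\tfrac{\pi}{2}+\tau)-\ln q(\tfrac{\pi}{2}+\tau)\bigr)$, which is exactly $-\ln\frac{l}{q}(\tfrac{\pi}{2}+\tau)$. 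This manipulation uses only that $l,q>0$, valid because they are distances between distinct points.

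Since every step is a substitution or a one-line algebraic identity, there is no serious obstacle here; the only point requiring a moment of care is bookkeeping the angle substitution $\psi\mapsto\tfrac{\pi}{2}\pm\tau$ consistently and confirming that the arguments stay within the domain $(0,\pi)$ of $l$ and $q$, which is what forces the restriction on $\tau$.
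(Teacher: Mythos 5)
Your proposal is correct and matches the paper's intent exactly: the paper states this corollary without proof as an immediate consequence of Lemma~\ref{Feature-Feature-lemma}, and your substitution $\psi=\tfrac{\pi}{2}\pm\tau$ into \eqref{Feature-Feature-Lemma-formula} followed by addition and the identity $\ln\tfrac{l}{q}=\ln l-\ln q$ is precisely the omitted argument. The domain bookkeeping for $\tau$ is a sensible extra precaution but changes nothing substantive.
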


\subsection {Change of variables.}

\smallskip

The following Lemma describes some change rules important for integration. First let us summarize the notation necessary to state the Lemma.

\textbf{Notation:}
\begin{description}
  \item[$S^k(P,R)=S^k_P(R)$] is the $k$-dimensional sphere of radius $R$ centered \\ at $P\in\mathbb{R}^{k+1}$;
  \item[$S^k(R)=S^k(O,R)=S^k_O(R)$] is the $k$-dimensional sphere of radius $R$ centered at the origin $O$;
  \item[$x,y$] are two fixed points in $\mathbb{R}^{k+1}$, such that $r=|x|<|y|=R$;
  \item[$\Sigma=S^k(x,1)$] is the $k$-dimensional unit sphere centered at $x$;
  \item[$\psi=\angle Oxy$] and $\theta=\pi-\angle xOy$;
  \item[$\widetilde{y}=\Sigma\cap \text{line}(xy)$].
\end{description}

\begin{figure}[h]
  \center\epsfig{figure=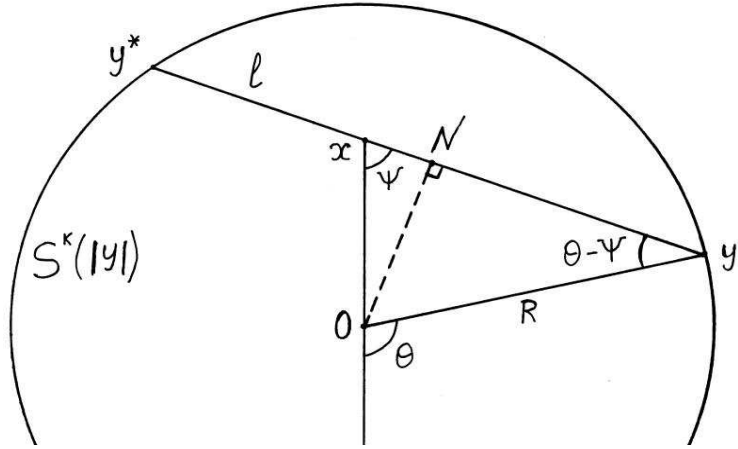, height=6cm, width=10cm}
  \caption{$\theta\leftrightarrow \psi$ Exchange}\label{Theta_Psi_Exchange}
\end{figure}

\begin{lemma}[The integration exchange rules]\label{VarExchange-Lemma}

\begin{equation}\label{VarExchange-1}
  \text{\textbf{(A)}}\quad d\theta=\frac{2q}{l+q}d\psi\quad\text{and}\quad dS_y=\frac{2R}{l+q}q^k d\Sigma_{\widetilde{y}}\,,
\end{equation}
where $dS_y$ and $d\Sigma_{\widetilde{y}}$ are the volume elements of $S^k(R)$ and $\Sigma$ respectively.
\begin{equation}\label{VarExchange-2}
    \text{\textbf{(B)}}\quad\quad\quad\quad\quad\quad\quad  \int\limits_{\Sigma}f(l,q)d\Sigma=\int\limits_{\Sigma}f(q,l)d\Sigma
\end{equation}
for any  complex-valued function $f(l(\psi), q(\psi))$ integrable on $[0,\pi]$.
\end{lemma}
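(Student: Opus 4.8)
The plan is to establish both displays in \textbf{(A)} by an explicit computation in the two-dimensional plane through $O$, $x$, $y$, and then to deduce \textbf{(B)} from the reflection symmetry already recorded in Lemma~\ref{Feature-Feature-lemma}. I would place $O$ at the origin with $x=(r,0,\dots ,0)$ and parametrize the line through $x$ that makes angle $\psi$ with the ray $xO$. Intersecting this line with $S^k(R)$ produces a quadratic in the signed distance $t$ from $x$, with roots $t=r\cos\psi\pm D$ where $D=\sqrt{R^2-r^2\sin^2\psi}$; the forward root is $q=r\cos\psi+D$ and the backward one gives $l=D-r\cos\psi$, so that $l+q=2D$ and $lq=R^2-r^2$, consistent with \eqref{GeoInte-5}--\eqref{GeoInte-6}. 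Writing $y$ out explicitly and using the law of cosines in $\triangle Oxy$ then gives $\sin\theta=\frac{q\sin\psi}{R}$ and $\cos\theta=\frac{q\cos\psi-r}{R}$ for $\theta=\pi-\angle xOy$.

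For the first identity in \eqref{VarExchange-1} I would differentiate $\cos\theta=(q\cos\psi-r)/R$, using $dq/d\psi=-rq\sin\psi/D$. The expression for $d\theta/d\psi$ then has numerator $q(D\cos\psi-r\sin^2\psi)$ and denominator $D(q\cos\psi-r)$, and the decisive observation is that $q\cos\psi-r=D\cos\psi-r\sin^2\psi$ upon substituting $q=r\cos\psi+D$; the bracketed factors cancel and leave $d\theta=\frac{q}{D}\,d\psi=\frac{2q}{l+q}\,d\psi$. This cancellation is the one genuinely delicate algebraic step, and I expect it to be the main obstacle; the remainder is bookkeeping.

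For the second identity I would pass to spherical coordinates sharing the $Ox$-axis. Since $\theta$ is exactly the polar angle of $y$ on $S^k(R)$ measured from the $-x$ direction while $\psi$ is the polar angle of $\widetilde{y}$ on $\Sigma$ measured from the $xO$ direction, and the azimuthal $(k-1)$-sphere is common to both, one has $dS_y=R^k\sin^{k-1}\theta\,d\theta\,d\omega_{k-1}$ and $d\Sigma_{\widetilde{y}}=\sin^{k-1}\psi\,d\psi\,d\omega_{k-1}$. Dividing and inserting $\sin\theta/\sin\psi=q/R$ together with the first identity collapses the ratio to $\frac{2R}{l+q}q^k$, which is the claim. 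Equivalently, one may quote the central-projection factor $q^k/\cos\alpha$ for the map $y\mapsto\widetilde{y}$ and compute $\cos\alpha=\frac{y}{R}\cdot\frac{y-x}{q}=\frac{l+q}{2R}$ from $x\cdot y=(r^2+R^2-q^2)/2$ and $R^2-r^2=lq$; both routes yield the same constant.

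Finally, \textbf{(B)} follows by symmetry. The involution $\psi\mapsto\pi-\psi$ is realized on $\Sigma$ by reflection in the hyperplane through $x$ orthogonal to $Ox$, which is an isometry and therefore preserves the measure $d\Sigma$. Under this reflection Lemma~\ref{Feature-Feature-lemma} gives $l(\pi-\psi)=q(\psi)$ and $q(\pi-\psi)=l(\psi)$, so the integrand $f(l,q)$ is carried to $f(q,l)$ while $d\Sigma$ is unchanged, whence $\int_\Sigma f(l,q)\,d\Sigma=\int_\Sigma f(q,l)\,d\Sigma$.
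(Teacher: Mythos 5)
Your proposal is correct and takes essentially the same route as the paper: both parts of \textbf{(A)} are obtained by differentiating a trigonometric relation in the triangle $xOy$ (you use the law of cosines with the explicit chord roots $q=r\cos\psi+D$, $l=D-r\cos\psi$, while the paper differentiates the law of sines and identifies the resulting numerator and denominator geometrically as $q$ and $(l+q)/2$), and the second volume-element formula is in both cases reduced to the ratio of the common azimuthal $(k-1)$-spheres times $d\theta/d\psi$. Your derivation of \textbf{(B)} via the reflection isometry of $\Sigma$ is just a geometric rephrasing of the paper's change of variables $\widetilde\psi=\pi-\psi$ combined with Lemma~\ref{Feature-Feature-lemma}, so no substantive difference there either.
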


\begin{proof}[Proof of \eqref{VarExchange-1}.]

Using the elementary geometry and the figure above, we observe that $\angle xyO=\theta-\psi$. The law of sines applied to the triangle $\triangle xOy$, gives
\begin{equation}\label{VarExchange-3}
|x|\, \sin(\psi)=R\, \sin(\theta-\psi)\,.
\end{equation}
Differentiation with respect to $\theta$ and $\psi$ yields
\begin{equation}\label{VarExchange-4}
    d\theta=\frac{|x|\, \cos(\psi)+R\, \cos(\theta-\psi)}
    {R\, \cos(\theta-\psi)}\, d\psi\,.
\end{equation}
Again, look at the picture above and observe that if $N$ is the orthogonal projection of the origin $O$ to the chord $yy^*$, then $N$ must be the midpoint for the chord $yy^*$. Therefore,
\begin{equation}\label{VarExchange-4-1}
    |Ny|=R\cos(\theta-\psi)=\frac{l+q}{2}\,,
\end{equation}
which is the denominator in \eqref{VarExchange-4}. Note also that
\begin{equation}\label{VarExchange-4-2}
    q=|x-y|=|xN|+|Ny|=|x|\cos\psi+R\cos(\theta-\psi)\,,
\end{equation}
which is precisely the numerator in \eqref{VarExchange-4}.
Therefore, combining \eqref{VarExchange-4}, \eqref{VarExchange-4-1} and \eqref{VarExchange-4-2}, we have
\begin{equation}\label{VarExchange-5}
    d\theta=\frac{2q}{q+l}\, d\psi\,,
\end{equation}
and then, the first formula in \eqref{VarExchange-1} is complete.

To prove the second formula in \eqref{VarExchange-1}, we introduce some additional notation listed and pictured on Figure~\ref{dS_dSigma_Exchange} below.

\textbf{Additional notation:}
\begin{description}
  \item[$\sigma_k$] is the volume of a $k$-dimensional unit sphere;
  \item[$P_y$] and $P_{\widetilde{y}}$ are the orthogonal projections of $y$ and $\widetilde{y}$ respectively to line $Ox$;
  \item[$H(y)$] and $H(\widetilde{y})$ are the $k$-dimensional hyperplanes passing through $y$ and $\widetilde{y}$ respectively and orthogonal to $Ox$;
  \item[$S^{k-1}(P_y\,, |y P_y |)$]$=H(y)\cap S^k(|y|)$;
  \item[$S^{k-1}(P_{\widetilde{y}}\,, |\widetilde{y} P_{\widetilde{y}}|)$]$=H(\widetilde{y})\cap \Sigma$;
  \item[$dS_y^{k-1}(P_y\,, |yP_y|)$] is the volume element of $S^{k-1}(P_y\,,|yP_y|)$ at point $y$;
  \item[$dS_{\widetilde{y}}^{k-1}(P_{\widetilde{y}}\,, |\widetilde{y} P_{\widetilde{y}}|)$] is the volume element of $S^{k-1}(P_{\widetilde{y}}\,, |\widetilde{y} P_{\widetilde{y}}|)$ at point $\widetilde{y}$.
\end{description}

\begin{figure}[h]
  \center\epsfig{figure=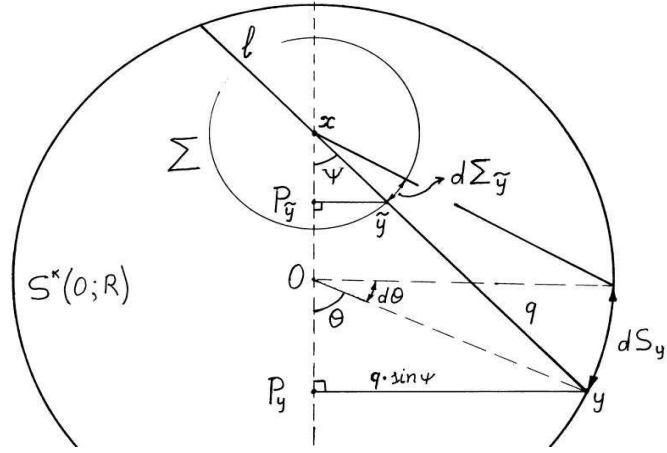, height=6cm, width=9cm}
  \caption{$dS\leftrightarrow d\Sigma$ Exchange}\label{dS_dSigma_Exchange}
\end{figure}

Note first that
\begin{equation}\label{VarExchange-6}
\begin{split}
    & dS_y=Rd\theta\cdot dS_y^{k-1}(P_y\,,|yP_y|)\quad\text{and}
    \\& d\Sigma_{\widetilde{y}}=
    d\psi\cdot dS_{\widetilde{y}}^{k-1}(P_{\widetilde{y}}\,,|\widetilde{y}P_{\widetilde{y}}|) \,.
\end{split}
\end{equation}
On the other hand,
\begin{equation}\label{VarExchange-7}
    dS_y^{k-1}(P_y\,,|yP_y|)=
    q^{k-1}\cdot dS_{\widetilde{y}}^{k-1}(P_{\widetilde{y}}\,,|\widetilde{y}P_{\widetilde{y}}|)\,.
\end{equation}
Therefore, combining \eqref{VarExchange-6}, \eqref{VarExchange-7} and the expression for $d\theta$ from \eqref{VarExchange-1}, we have the following sequence of equalities.
\begin{equation}\label{VarExchange-8}
\begin{split}
    dS_y
    & =Rd\theta\cdot dS_y^{k-1}(P_y\,,|yP_y|)=Rd\theta\cdot q^{k-1}\cdot dS_{\widetilde{y}}^{k-1}(P_{\widetilde{y}}\,,|\widetilde{y}P_{\widetilde{y}}|)
    \\& =\frac{2R}{l+q}q^{k}d\psi\cdot dS_{\widetilde{y}}^{k-1}(P_{\widetilde{y}}\,,|\widetilde{y}P_{\widetilde{y}}|)
    =\frac{2R}{l+q}q^{k}d\Sigma_{\widetilde{y}}\,,
\end{split}
\end{equation}
which completes the proof of \eqref{VarExchange-1}.
\end{proof}

\begin{proof}[Proof of \eqref{VarExchange-2}.]

Note that the function $f(l,q)=f(l(\psi), q(\psi))$ depends only on the angle $\psi$ pictured above. Therefore, if we introduce
\begin{equation}\label{VarExchange-9}
    \Sigma^{k-1}(\psi)=\{\widetilde{y}\in\Sigma\mid\angle \widetilde{y}xO=\psi\}\,,
\end{equation}
we may write
\begin{equation}\label{VarExchange-10}
\begin{split}
    \int\limits_{\widetilde{y}\in\Sigma}f(l,q)d\Sigma_{\widetilde{y}}
    & =\int\limits_0^\pi d\psi\int\limits_{\widetilde{y}\in\Sigma^{k-1}(\psi)}f(l,q)d\Sigma_{\widetilde{y}}^{k-1}(\psi)
    \\& =\int\limits_0^\pi f(l,q)(\sin\psi)^{k-1}\sigma_{k-1}d\psi\,,
\end{split}
\end{equation}
since $f(l,q)$ remains constant while $\widetilde{y}\in\Sigma^{k-1}(\psi)$ and $|\Sigma^{k-1}(\psi)|=\sigma_{k-1}(\sin\psi)^{k-1}$ is the volume of $\Sigma^{k-1}(\psi)$.
 Using \eqref{VarExchange-10}, then  \eqref{Feature-Feature-Lemma-formula} from Lemma \eqref{Feature-Feature-lemma}, p.~\pageref{Feature-Feature-lemma}, and the following change of variables $\widetilde{\psi}=\pi-\psi$, we have the following sequence of equalities.
\begin{equation}\label{VarExchange-11}
\begin{split}
    & \int\limits_{\Sigma}f(l,q)d\Sigma=\int\limits_{\psi=0}^{\psi=\pi} f(l(\psi),q(\psi))\sigma_{k-1}(\sin\psi)^{k-1}d\psi
    \\& =\int\limits_0^\pi f(q(\pi-\psi), l(\pi-\psi))\sigma_{k-1}(\sin\psi)^{k-1}d\psi
    \\& =-\int\limits_\pi^0 f(q(\widetilde{\psi}), l(\widetilde{\psi}))\sigma_{k-1}(\sin\widetilde{\psi})^{k-1}d\widetilde{\psi}
    \\& =\int\limits_{\widetilde{\psi}=0}^{\widetilde{\psi}=\pi} f(q(\widetilde{\psi}), l(\widetilde{\psi})) \sigma_{k-1}(\sin\widetilde{\psi})^{k-1}d\widetilde{\psi}=\int\limits_\Sigma f(q,l)d\Sigma\,,
\end{split}
\end{equation}
which completes the proof of \eqref{VarExchange-2}, and the proof of Lemma \eqref{VarExchange-Lemma}.
\end{proof}

\subsection {The Differentiation of $l/q$ and $l+q$ with respect to $\psi$.}

\begin{lemma}\label{l/q-differentiation-lemma}

\begin{equation}\label{Differention-1}
    \frac{d\omega}{d\psi}=\frac{d}{d\psi}\left(\frac{l}{q}\right)=\frac{l}{q}\cdot\frac{4r\sin\psi}{l+q}\,,
\end{equation}
where $q,l, \psi$ were defined in \eqref{Feature-2} and $\theta=\pi-\angle xOy$.
\end{lemma}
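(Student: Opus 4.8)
The plan is to reduce the claim to a logarithmic derivative. Since $\omega=l/q$ by Theorem~\ref{Geometric-Interpretation-theorem}, and since the asserted right-hand side of \eqref{Differention-1} is exactly $\frac{l}{q}$ multiplied by $\frac{4r\sin\psi}{l+q}$, the identity \eqref{Differention-1} is equivalent to
\begin{equation*}
    \frac{d}{d\psi}\ln\frac{l}{q}=\frac{4r\sin\psi}{l+q}\,,
\end{equation*}
and this is the form I would actually establish. The first step is to record closed expressions for $l$ and $q$ as functions of $\psi$. Writing $\beta=\theta-\psi=\angle xyO$, equations \eqref{VarExchange-4-2} and \eqref{VarExchange-4-1} give $q=r\cos\psi+R\cos\beta$ and $l+q=2R\cos\beta$, and subtracting yields
\begin{equation*}
    q=R\cos\beta+r\cos\psi\quad\text{and}\quad l=R\cos\beta-r\cos\psi\,.
\end{equation*}

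The second step is to differentiate, keeping in mind that $\beta$ depends on $\psi$ through the law of sines \eqref{VarExchange-3}, i.e.\ $r\sin\psi=R\sin\beta$. Differentiating that constraint gives $\frac{d\beta}{d\psi}=\frac{r\cos\psi}{R\cos\beta}$. Substituting this into the $\psi$-derivatives of $l$ and $q$ and collapsing with the angle-addition formulae, I expect the clean expressions
\begin{equation*}
    \frac{dq}{d\psi}=-\,\frac{r\sin(\psi+\beta)}{\cos\beta}\quad\text{and}\quad
    \frac{dl}{d\psi}=\frac{r\sin(\psi-\beta)}{\cos\beta}\,.
\end{equation*}

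The final step is to assemble the logarithmic derivative $\frac{l'}{l}-\frac{q'}{q}$ over the common denominator $lq$, so that the whole expression becomes $\frac{r}{\cos\beta}\cdot\frac{q\sin(\psi-\beta)+l\sin(\psi+\beta)}{lq}$. The crux of the argument is the collapse of the numerator $q\sin(\psi-\beta)+l\sin(\psi+\beta)$: expanding via the sum-to-product identities turns it into $2R\sin\psi\cos^2\beta-2r\cos^2\psi\sin\beta$, and after eliminating $\sin\beta$ through $R\sin\beta=r\sin\psi$ it factors through the difference of squares $R^2\cos^2\beta-r^2\cos^2\psi=(R\cos\beta-r\cos\psi)(R\cos\beta+r\cos\psi)=lq$, producing $\frac{2\sin\psi}{R}\,lq$. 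This recognition of the hidden factor $lq$ is the step I expect to be the main obstacle, since it requires carrying the trigonometric bookkeeping exactly until the difference-of-squares structure emerges. Once it does, the $lq$ cancels and $\frac{d}{d\psi}\ln\frac{l}{q}=\frac{2r\sin\psi}{R\cos\beta}$; the cosmetic substitution $2R\cos\beta=l+q$ from \eqref{VarExchange-4-1} then rewrites this as $\frac{4r\sin\psi}{l+q}$, which is the required identity.
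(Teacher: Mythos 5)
Your proof is correct, but it takes a genuinely different route from the paper's. The paper works through the variable $\theta=\pi-\angle xOy$: it writes $\omega=(R^2-r^2)/(R^2+r^2+2Rr\cos\theta)$, differentiates this rational expression in $\theta$ using the identities $R^2-r^2=lq$, $q^2=R^2+r^2+2Rr\cos\theta$ and $R\sin\theta=q\sin\psi$ to get $d\omega/d\theta=2rl\sin\psi/q^2$, and then simply chains with the Jacobian $d\theta/d\psi=2q/(l+q)$ already established in Lemma~\ref{VarExchange-Lemma}. That makes the whole proof a three-line computation riding on previously proved facts. You instead stay entirely in the variable $\psi$, extract the closed forms $q=R\cos\beta+r\cos\psi$ and $l=R\cos\beta-r\cos\psi$ from \eqref{VarExchange-4-1}--\eqref{VarExchange-4-2}, differentiate the constraint $r\sin\psi=R\sin\beta$, and compute the logarithmic derivative directly; the crux is the difference-of-squares collapse $R^2\cos^2\beta-r^2\cos^2\psi=lq$, which I have checked and which does go through (the numerator $q\sin(\psi-\beta)+l\sin(\psi+\beta)$ does reduce to $2R\sin\psi\cos^2\beta-2r\cos^2\psi\sin\beta=\frac{2\sin\psi}{R}\,lq$). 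Your approach is self-contained and makes the mechanism visible at the level of the explicit trigonometric parametrization, at the cost of heavier bookkeeping; the paper's approach is shorter because it reuses the $\theta\leftrightarrow\psi$ exchange rule and the algebraic identities for $l$, $q$ that were set up precisely for this purpose. Both yield the same identity.
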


\begin{proof}[Proof of Lemma \ref{l/q-differentiation-lemma}] Recall that for $R=|y|>|x|=r$,
\begin{equation}\label{Differention-2}
    \omega(x,y)=\frac{R^2-r^2}{R^2+r^2+2Rr\cos\theta}=\frac{l}{q}\,,
\end{equation}
and then, since $R^2-r^2=lq$, $R^2+r^2+2Rr\cos\theta=q^2$ and ${R\sin\theta=q\sin\psi}$, the direct computation yields
\begin{equation}\label{Differention-3}
    \frac{d\omega}{d\theta}=\frac{2rl\sin\psi}{q^2}\,.
\end{equation}
Therefore, using \eqref{Differention-3} and the expression for $d\theta$ from \eqref{VarExchange-1}, p.~\pageref{VarExchange-1}, we have
\begin{equation}\label{Differention-4}
    \frac{d\omega}{d\psi}=\frac{d\omega}{d\theta}\frac{d\theta}{d\psi}
    =\frac{2rl\sin\psi}{q^2}\cdot\frac{2q}{l+q}
    =\frac{l}{q}\cdot\frac{4r\sin\psi}{l+q}\,,
\end{equation}
which is precisely what was stated in Lemma \eqref{l/q-differentiation-lemma}.
\end{proof}

\begin{corollary}
\begin{equation}\label{Substitution-for-d-psi}
    d\psi=\frac{l+q}{4r\sin\psi}d\ln\omega\,.
\end{equation}
\end{corollary}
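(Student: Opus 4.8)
The plan is to obtain this corollary as an immediate algebraic rearrangement of Lemma~\ref{l/q-differentiation-lemma}, with essentially no new geometric input. The starting point is equation~\eqref{Differention-1}, which already gives the derivative of $\omega=l/q$ with respect to $\psi$ in the convenient factored form
\begin{equation*}
    \frac{d\omega}{d\psi}=\omega\cdot\frac{4r\sin\psi}{l+q}\,.
\end{equation*}
The only observation needed is that $\omega=l/q>0$ (since $l,q$ are positive distances for $R=|y|>|x|=r$ and $\psi\in(0,\pi)$), so that $\ln\omega$ is well-defined and the chain rule gives $d\ln\omega=d\omega/\omega$.

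First I would divide both sides of the displayed identity by $\omega$, producing
\begin{equation*}
    \frac{1}{\omega}\frac{d\omega}{d\psi}=\frac{d\ln\omega}{d\psi}=\frac{4r\sin\psi}{l+q}\,.
\end{equation*}
Then I would solve for $d\psi$ by multiplying through by the reciprocal factor $(l+q)/(4r\sin\psi)$, which is finite and nonzero for $\psi\in(0,\pi)$ because $\sin\psi\neq0$ there. This yields exactly
\begin{equation*}
    d\psi=\frac{l+q}{4r\sin\psi}\,d\ln\omega\,,
\end{equation*}
which is \eqref{Substitution-for-d-psi}.

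Since every ingredient is supplied by the preceding lemma, there is no genuine obstacle here; the corollary is a restatement of Lemma~\ref{l/q-differentiation-lemma} solved for the differential $d\psi$ rather than for $d\omega/d\psi$. The only point deserving a word of care is the positivity of $\omega$ and the nonvanishing of $\sin\psi$ on the open interval $(0,\pi)$, which together guarantee that passing to the logarithmic differential $d\ln\omega$ and inverting the coefficient are both legitimate. The purpose of casting the relation in this form is practical rather than conceptual: it isolates $d\psi$ against $d\ln\omega$, which is the combination that will appear naturally when integrating radial quantities against $\omega$ in the later construction of eigenfunctions.
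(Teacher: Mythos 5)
Your proposal is correct and follows the same route as the paper, which simply declares \eqref{Substitution-for-d-psi} a direct consequence of \eqref{Differention-1}; your version just makes the division by $\omega$ and the inversion of the coefficient explicit, with the appropriate remarks on positivity of $\omega$ and nonvanishing of $\sin\psi$.
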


\begin{proof}
Formula \eqref{Substitution-for-d-psi} is the direct consequence of~\eqref{Differention-1}.
\end{proof}

\begin{lemma}\label{(l+q)-Differentiation-Lemma}
\begin{equation}\label{(l+q)-Differentiation-formula}
    \frac{d(l+q)}{d\psi}=-\frac{2|x|^2\sin(2\psi)}{l+q}\,.
\end{equation}
\end{lemma}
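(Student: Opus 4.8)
The plan is to reduce everything to two elementary identities already available from the triangle $\triangle xOy$, namely
\[
    l+q=2R\cos(\theta-\psi)
    \quad\text{and}\quad
    q-l=2|x|\cos\psi,
\]
and then differentiate. The first is exactly \eqref{VarExchange-4-1}. The second follows by combining \eqref{VarExchange-4-1} with \eqref{VarExchange-4-2}: since $q=|x|\cos\psi+R\cos(\theta-\psi)$ and $R\cos(\theta-\psi)=\frac{l+q}{2}$, subtracting gives $q-\frac{l+q}{2}=|x|\cos\psi$, i.e.\ $q-l=2|x|\cos\psi$. Geometrically this just records that $x$ lies between the foot $N$ and $y^*$, so that $l=R\cos(\theta-\psi)-|x|\cos\psi$ and $q=R\cos(\theta-\psi)+|x|\cos\psi$.

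Next I would differentiate $l+q=2R\cos(\theta-\psi)$ with respect to $\psi$, treating $R$ as constant and $\theta$ as a function of $\psi$. The chain rule gives
\[
    \frac{d(l+q)}{d\psi}=-2R\sin(\theta-\psi)\left(\frac{d\theta}{d\psi}-1\right).
\]
Here I would substitute $\frac{d\theta}{d\psi}=\frac{2q}{l+q}$ from \eqref{VarExchange-1} in Lemma \ref{VarExchange-Lemma}, so that
\[
    \frac{d\theta}{d\psi}-1=\frac{2q-(l+q)}{l+q}=\frac{q-l}{l+q}=\frac{2|x|\cos\psi}{l+q},
\]
where the last step invokes the second identity above.

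Finally I would clear the factor $R$ using the law of sines \eqref{VarExchange-3}, which reads $|x|\sin\psi=R\sin(\theta-\psi)$, hence $R\sin(\theta-\psi)=|x|\sin\psi$. Combining the three displays,
\[
    \frac{d(l+q)}{d\psi}
    =-2\bigl(R\sin(\theta-\psi)\bigr)\cdot\frac{2|x|\cos\psi}{l+q}
    =-\frac{4|x|^2\sin\psi\cos\psi}{l+q}
    =-\frac{2|x|^2\sin(2\psi)}{l+q},
\]
which is precisely \eqref{(l+q)-Differentiation-formula}. I do not expect a genuine obstacle: the computation is routine once the two geometric identities are secured. The only points requiring care are the bookkeeping in the chain rule — carrying the factor $\frac{d\theta}{d\psi}-1$ correctly and remembering that it is $\theta-\psi$, not $\theta$, inside the cosine — and the choice to eliminate $R$ via the law of sines rather than by expanding $\cos(\theta-\psi)$, since it is exactly this substitution $R\sin(\theta-\psi)=|x|\sin\psi$ that collapses the answer into the clean factor $|x|^2\sin(2\psi)$ in the numerator.
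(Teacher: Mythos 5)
Your proof is correct, but it takes a different route from the paper's. The paper's argument is a one-liner: from Figure~\ref{Theta_Psi_Exchange}, the foot $N$ of the perpendicular from $O$ to the chord $yy^*$ satisfies $|ON|=|x|\sin\psi$ and $|Ny|=\tfrac{l+q}{2}$, so the Pythagorean theorem in $\triangle ONy$ gives $\left(\tfrac{l+q}{2}\right)^2=R^2-|x|^2\sin^2\psi$, and implicit differentiation of this identity in $\psi$ (with $R$ constant) immediately yields \eqref{(l+q)-Differentiation-formula}. You instead keep $\theta$ in play, differentiating $l+q=2R\cos(\theta-\psi)$ by the chain rule, feeding in $d\theta/d\psi=2q/(l+q)$ from Lemma~\ref{VarExchange-Lemma}, the decomposition $q-l=2|x|\cos\psi$, and the law of sines to eliminate $R$. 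Both are sound, and in fact they are two faces of the same geometry: squaring your identity $l+q=2R\cos(\theta-\psi)$ and substituting $R\sin(\theta-\psi)=|x|\sin\psi$ recovers exactly the paper's Pythagorean relation, so the paper has simply pre-collapsed the $\theta$-bookkeeping that you carry through explicitly. What your version buys is that it reuses already-established machinery (the $d\theta/d\psi$ rule and the two projections \eqref{VarExchange-4-1}, \eqref{VarExchange-4-2}) rather than appealing to the figure again; what the paper's version buys is brevity and independence from Lemma~\ref{VarExchange-Lemma}. One cosmetic note: the paper's displayed identity is written with $\rho^2$ where, in the general notation of that section ($R=|y|$), it should read $R^2$; your derivation avoids that slip since $R$ cancels out via the law of sines before the end.
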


\begin{proof}
 Using Figure~\ref{Theta_Psi_Exchange}, p.~\pageref{Theta_Psi_Exchange} and the Pythagorean theorem we can observe that
 $$\left(\frac{l+q}{2}\right)^2=\rho^2-|x|^2\sin^2\psi\,,$$
 which leads to~\eqref{(l+q)-Differentiation-formula}. This completes the proof of Lemma~\ref{(l+q)-Differentiation-Lemma}.
\end{proof}

\section{Notations and Hyperbolic Geometry Preliminaries.}

For the future reference, let us introduce the following notation. Let $H_\rho^n$ and $B^n_\rho$ be the half-space model and the ball model, respectively, of a hyperbolic $n-$dimensional space with a constant sectional curvature $\kappa=1/{\rho^2}<0$. Recall that the half-space model $H_\rho^n$ consists of the open half-space of points $(x_1, ..., x_{n-1}, t)$ in $R^n$ for all $t>0$ and the metric is given by $(\rho/t)|ds|$, where $|ds|$ is the Euclidean distance element. The ball model $B^n_\rho$ consists of the open unit ball $|X|^2+T^2<\rho^2, (X,T)=(X_1, ..., X_{n-1}, T)$ in $R^n$, and the metric for this model is given by $2\rho^2|ds|^2/(\rho^2-|X|^2-T^2)$.

Recall also that in a hyperbolic space a Laplacian can be represent as
\begin{equation}\label{Uniq-6}
\begin{split}
    \triangle
    & =\frac{1}{\rho^2}\left[ t^2\left( \frac{\partial^2}{\partial x_1^2}+...+\frac{\partial^2}{\partial x_{n-1}^2} + \frac{\partial^2}{\partial t^2} \right)-(n-2)t\frac{\partial}{\partial t} \right]
    \\& =\frac{\partial^2}{\partial r^2}+
    \frac{n-1}{\rho}\coth\left(\frac{r}{\rho}\right)\frac{\partial}{\partial r}+\triangle_{S(0,r)}\,,
\end{split}
\end{equation}
where the first expression is the hyperbolic Laplacian expressed by using Euclidean rectangular coordinates in the upper half-space model and the second expression represents  the hyperbolic Laplacian expressed in the geodesic hyperbolic polar coordinates. Here $\triangle_{S(0,r)}$ is the Laplacian on the geodesic sphere of a hyperbolic radius $r$ about the origin.

The next step is to show that $\omega^\alpha$ as an eigenfunction of the Hyperbolic Laplacian in $B^n_{\rho}$.

\begin{proposition}\label{eigenfunction-omega-proposition}
Let $u$ be any point of $S(\rho)$ and $m=(X,T)\in B(\rho)$, where $S(O,\rho)$ and $B(\rho)$ are the Euclidean sphere and ball, respectively, both of the same radius $\rho$ centered at the origin $O$. Let $k=n-1$ and
\begin{equation}\label{Uniq-11}
    \omega=\omega(u,m)=\frac{\rho^2-\eta^2}{|u-m|^2}=\frac{|u|^2-|m|^2}{|u-m|^2}\,.
\end{equation}
Then
\begin{equation}\label{Omega-as-Eigenfunction}
    \triangle_m \omega^\alpha+\frac{\alpha k-\alpha^2}{\rho^2}\, \omega^\alpha=0\,.
\end{equation}
\end{proposition}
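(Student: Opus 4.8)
The plan is to avoid a head-on computation in the ball model and instead exploit the isometry invariance of the Laplace--Beltrami operator, transporting the problem to the upper half-space model $H^n_\rho$, where the first line of \eqref{Uniq-6} makes everything transparent. The first step is to observe that the height function is already an eigenfunction there. Since $t^\alpha$ depends only on the last coordinate, all the $x_i$-derivatives in \eqref{Uniq-6} annihilate it, and
\begin{equation*}
\triangle t^\alpha=\frac{1}{\rho^2}\bigl[t^2\,\partial_t^2(t^\alpha)-(n-2)t\,\partial_t(t^\alpha)\bigr]=\frac{\alpha(\alpha-1)-(n-2)\alpha}{\rho^2}\,t^\alpha=\frac{\alpha^2-\alpha k}{\rho^2}\,t^\alpha,
\end{equation*}
using $k=n-1$. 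Thus $\triangle t^\alpha+\frac{\alpha k-\alpha^2}{\rho^2}t^\alpha=0$, which is precisely \eqref{Omega-as-Eigenfunction} with $\omega$ replaced by $t$. So it suffices to realize $\omega^\alpha$ as the pullback of $t^\alpha$ under a hyperbolic isometry.

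The second, and main, step is to identify $\omega(u,m)$ with the height function of the half-space model up to a positive constant. I would use the inversion centered at the boundary point $u\in S(\rho)$,
\begin{equation*}
\iota(z)=u+c^2\,\frac{z-u}{|z-u|^2},
\end{equation*}
which is a M\"obius transformation, hence conformal, and which sends $u\mapsto\infty$; composed with a Euclidean similarity (a hyperbolic isometry of the half-space) it carries $B^n_\rho$ isometrically onto $H^n_\rho$, mapping the boundary sphere $S(\rho)$ to the bounding hyperplane. The crucial claim is that the signed distance of $\iota(m)$ to that hyperplane, i.e. its height coordinate $t$, satisfies
\begin{equation*}
t(\iota(m))=\frac{c^2}{2\rho}\cdot\frac{\rho^2-|m|^2}{|m-u|^2}=\frac{c^2}{2\rho}\,\omega(u,m).
\end{equation*}
This reduces to the elementary identity $2\rho\bigl(\langle m,u/\rho\rangle-\rho\bigr)+|m-u|^2=|m|^2-\rho^2$, obtained by expanding $|m-u|^2=|m|^2-2\langle m,u\rangle+\rho^2$ and using $|u|=\rho$; locating the image hyperplane via the image of the antipode $-u$ fixes the additive normalization and the sign.

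With $\omega=C\,(t\circ\Psi)$ for the resulting isometry $\Psi\colon B^n_\rho\to H^n_\rho$ and a constant $C>0$, the conclusion follows from $\triangle^{B}(f\circ\Psi)=(\triangle^{H}f)\circ\Psi$ and the homogeneity of the eigenvalue equation in the constant factor $C^\alpha$:
\begin{equation*}
\triangle_m\omega^\alpha=C^\alpha\,(\triangle^{H}t^\alpha)\circ\Psi=\frac{\alpha^2-\alpha k}{\rho^2}\,C^\alpha\,(t^\alpha\circ\Psi)=\frac{\alpha^2-\alpha k}{\rho^2}\,\omega^\alpha,
\end{equation*}
which is \eqref{Omega-as-Eigenfunction}. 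The hard part is the second step: one must carefully verify both the algebraic identity above and, more importantly, that $\iota$ (after the similarity) is genuinely a hyperbolic isometry between the two models rather than merely a conformal map of domains, so that the Laplacian really intertwines. The first and third steps are short and formal. As an alternative I could compute $\triangle_m\omega^\alpha$ entirely inside the ball model via the conformal transformation law $\triangle_g=\lambda^{-n}\operatorname{div}(\lambda^{n-2}\nabla)$ for the metric $g=\lambda^2\delta$ of \eqref{Uniq-11}, but that route buries the geometry in a longer algebraic identity and I would prefer the isometry argument.
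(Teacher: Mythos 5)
Your proposal is correct and follows essentially the same route as the paper: the paper also observes that $t^\alpha$ is an eigenfunction in the half-space model and pulls it back to the ball via the Cayley transform (which is exactly your inversion at a boundary point composed with a similarity), for which, with $u=(0,\rho)$ and the standard normalization, the height coordinate equals $\omega(u,m)$ exactly, so the constant $C$ is $1$. The only differences are cosmetic: you carry out the computation of $\triangle t^\alpha$ and the identification $t\circ\Psi=C\,\omega$ explicitly for a general $u\in S(\rho)$, while the paper cites the explicit Cayley formulae and leaves the reduction to general $u$ (by rotational symmetry) implicit.
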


\begin{proof}[Proof of the Proposition.]
Notice that
\begin{equation}\label{Uniq-12}
    \triangle t^\alpha+\frac{\alpha k-\alpha^2}{\rho^2} t^\alpha=0
\end{equation}
and the relationship between $H^n_\rho$ and $B^n_\rho$ is given by Cayley Transform \\
 $K:B^n_\rho\rightarrow H^n_\rho$ expressed by the following formulae
\begin{equation}\label{Uniq-13}
\begin{split}
& x=\frac{2\rho X}{|X|^2+(T-\rho)^2}
\\& t=\frac{\rho^2-|X|^2-T^2}{|X|^2+(T-\rho)^2}=\frac{\rho^2-\eta^2}{|u-m|^2}=\omega(u,m)\,,
\end{split}
\end{equation}
where $u=(0,\rho)\in\partial B(O,\rho)=S(O,\rho)$. Since Cayley transform is an isometry, an orthogonal system of geodesics defining the Laplacian in $B^n_\rho$ is mapped isometrically to an orthogonal system of geodesics in $H^n_\rho$ and therefore,
\begin{equation}\label{Uniq-14}
    t^\alpha=\omega^\alpha(u,m)\quad \text{and}\quad \triangle 								 		 t^\alpha=\triangle_m\omega^\alpha(u,m)\,.
\end{equation}
Therefore, equation \eqref{Omega-as-Eigenfunction} is precisely equation \eqref{Uniq-12} written in $B^n_\rho$, which completes the proof of Proposition~\ref{eigenfunction-omega-proposition}.
\end{proof}



\subsection {Explicit representation of radial eigenfunctions.}

In this subsection we shall see that every radial eigenfunction in $B^{k+1}_{\rho}$ depending only on the distance from the origin has an explicit integral representation.

\begin{definition}\label{sharp-definition} Recall that $B^{k+1}_\rho$ is the ball model of the hyperbolic space~$H^{k+1}_\rho$ with the sectional curvature $\kappa=-1/\rho^2$ and let $B^{k+1}(O,\rho)$ be the Euclidean ball of radius $\rho$ centered at the origin and represents the ball model $B^{k+1}_\rho$. Suppose that $f$ is a function on $B^{k+1}_\rho$. We define its radialization about the origin $O$, written $f^\sharp_O (m)$, by setting
\begin{equation}\label{Radialization-Definition}
    f^\sharp_O(m)=\frac{1}{|S^k(|m|)|}\int\limits_{S^k(|m|)} f(m_1) dS_{m_1}\,,
\end{equation}
where the integration is considered with respect to the measure on $S^k(|m|)$ induced by the Euclidean metric of $\mathbb{R}^n\supset B^{k+1}(O,\rho)$; $|m|$ is the Euclidean distance between the origin $O$ and a point $m\in B^{k+1}_\rho$; $|S^k(|m|)|$ is the Euclidean volume of $S^k(|m|)$.
\end{definition}

The following lemma is a consequence of the uniqueness of Haar measure.

\begin{lemma}\label{Laplacian-Commutes-lemma}
\begin{equation}\label{Laplacian-Commutes}
    \triangle_m f^\sharp_O(m)=\frac{1}{|S^k(|m|)|}\int\limits_{S^k(|m|)}\triangle_{m_1} f(m_1) dS_{m_1}\,.
\end{equation}
\end{lemma}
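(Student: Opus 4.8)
The plan is to exploit the fact that the Euclidean rotations about the origin are precisely the hyperbolic isometries of $B^{k+1}_\rho$ that fix $O$, together with the fact that the hyperbolic Laplacian commutes with every isometry. Write $K=SO(k+1)$ for the group of such rotations, equipped with its normalized Haar measure $d\mu$. Because the metric $2\rho^2|ds|^2/(\rho^2-|X|^2-T^2)$ depends on $(X,T)$ only through $|X|^2+T^2$, each $g\in K$ preserves it and is therefore a hyperbolic isometry fixing the origin; moreover $g$ carries each Euclidean sphere $S^k(|m|)$ to itself.

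First I would recast the radialization as an average over the group. Fix $m$ and consider the orbit map $K\to S^k(|m|)$, $g\mapsto g\cdot m$. The pushforward of $d\mu$ under this map is a probability measure on $S^k(|m|)$ invariant under the transitive action of $K$, and the normalized Euclidean surface measure $dS/|S^k(|m|)|$ is another such invariant probability measure. By the uniqueness of the $K$-invariant probability measure on the homogeneous space $S^k(|m|)\cong K/K_m$ (with $K_m\cong SO(k)$ the stabilizer of $m$) --- equivalently, by the uniqueness of Haar measure --- these two measures coincide, so that
\begin{equation}\label{Haar-recast}
    f^\sharp_O(m)=\frac{1}{|S^k(|m|)|}\int\limits_{S^k(|m|)}f(m_1)\,dS_{m_1}=\int\limits_K f(g\cdot m)\,d\mu(g)\,.
\end{equation}

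Next I would differentiate under the integral sign in \eqref{Haar-recast}, which is legitimate since $f\in C^2$ and $K$ is compact, and then invoke the isometry-invariance of $\triangle$. Because each $g$ is a hyperbolic isometry, $\triangle_m[f(g\cdot m)]=(\triangle f)(g\cdot m)$, whence
\begin{equation}\label{Haar-commute}
    \triangle_m f^\sharp_O(m)=\int\limits_K\triangle_m[f(g\cdot m)]\,d\mu(g)=\int\limits_K(\triangle f)(g\cdot m)\,d\mu(g)\,.
\end{equation}
Applying the measure identification \eqref{Haar-recast} in reverse, now with $\triangle f$ in place of $f$, rewrites the right-hand side of \eqref{Haar-commute} as the Euclidean sphere average of $\triangle f$, which is precisely the claimed identity \eqref{Laplacian-Commutes}.

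The purely analytic point --- differentiation under the integral sign --- is routine given the $C^2$ hypothesis and the compactness of $K$. The one step deserving real care, and the reason the hint singles out Haar measure, is the identification in \eqref{Haar-recast} of the geometric surface average with the group average: I must verify that $K$ acts transitively on $S^k(|m|)$ and that the $K$-invariant probability measure on the orbit is unique, so that the Euclidean surface measure and the group-theoretic pushforward are forced to agree. The only other thing to confirm is that Euclidean rotations genuinely act by hyperbolic isometries in the ball model, which follows at once from the rotational symmetry of the metric noted above.
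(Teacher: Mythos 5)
Your argument is correct and is exactly the route the paper intends: the paper offers no written proof beyond the remark that the lemma ``is a consequence of the uniqueness of Haar measure,'' and your writeup --- identifying the spherical average with the $SO(k+1)$ Haar average via uniqueness of the invariant probability measure, then commuting $\triangle$ with the group average because rotations are hyperbolic isometries of $B^{k+1}_\rho$ --- is precisely the standard argument that remark points to. Nothing further is needed.
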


The next step is to obtain the explicit representation for radial eigenfunctions.

\begin{definition} Let $u\in S^k(\rho)$ be a fixed point and $|m_1|=|m|=\eta<\rho=|u|$. Then let us define
\begin{equation}\label{Radialization-of-omega}
    V_\alpha(\eta)=(\omega_O^\alpha)^\sharp(m)=
    \frac{1}{|S(\eta)|}\int\limits_{S(\eta)}\omega^\alpha(u,m_1)dS_{m_1}\,,
\end{equation}
where $\alpha$ is a complex number. Thus, $V_\alpha(\eta)$ is the radialization of $\omega^\alpha(u,m)$ about the origin.
\end{definition}




\begin{theorem}\label{Radial-func-repre-Thoerem} Let $r$ be the hyperbolic distance between the origin $O$ and $S(\eta)$. Then, the following function
\begin{equation}\label{Radial-func-presentation}
    \varphi_\mu(r)= V_\alpha(\eta(r))= V_\alpha\left(\rho\tanh\left(\frac{r}{2\rho}\right)\right)\,,
\end{equation}
where $\mu=(\alpha k-\alpha^2)/\rho^2$, is the unique radial eigenfunction assuming the value $1$ at the origin and corresponding to an eigenvalue $\mu$, i.e.,

\begin{equation}\label{Equation-for-Radi-Eigenfunction}
    \triangle\varphi_\mu(r)+\mu\varphi_\mu(r)=0.
\end{equation}
\end{theorem}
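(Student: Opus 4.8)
The plan is to exploit that $V_\alpha$ is, by its definition in \eqref{Radialization-of-omega}, precisely the radialization $(\omega_O^\alpha)^\sharp$ of $\omega^\alpha(u,\cdot)$, a function already shown to be an eigenfunction in Proposition~\ref{eigenfunction-omega-proposition}. First I would apply Lemma~\ref{Laplacian-Commutes-lemma}, which allows the Laplacian to pass through the radialization average, and then substitute \eqref{Omega-as-Eigenfunction}:
\begin{equation*}
    \triangle_m V_\alpha = \triangle_m (\omega_O^\alpha)^\sharp
    = \frac{1}{|S^k(|m|)|}\int\limits_{S^k(|m|)} \triangle_{m_1}\omega^\alpha(u,m_1)\,dS_{m_1}
    = -\mu\,(\omega_O^\alpha)^\sharp = -\mu V_\alpha\,,
\end{equation*}
where $\mu=(\alpha k-\alpha^2)/\rho^2$. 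This is exactly \eqref{Equation-for-Radi-Eigenfunction}, so the existence of an eigenfunction of the asserted form is immediate from the two cited results.

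Next I would check the normalization and the radial character. As $m\to O$ (equivalently $\eta\to 0$, $r\to 0$) one has $|u-m_1|\to|u|=\rho$ uniformly for $m_1\in S(\eta)$, so $\omega(u,m_1)=(\rho^2-\eta^2)/|u-m_1|^2\to 1$; hence $\omega^\alpha\to 1$ and its average satisfies $V_\alpha(0)=1$, giving $\varphi_\mu(0)=1$. Radiality is built in, since $V_\alpha$ depends only on $\eta=|m|$ and $\eta=\rho\tanh(r/2\rho)$ is a strictly monotone change of variable between Euclidean and hyperbolic radial distance; thus $\varphi_\mu$ is genuinely a function of $r$ alone. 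This settles everything but uniqueness.

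The uniqueness assertion is where the real work lies, and it is the step I expect to be the main obstacle. Restricting the polar form of the Laplacian in \eqref{Uniq-6} to radial functions, on which $\triangle_{S(0,r)}$ vanishes, reduces \eqref{Equation-for-Radi-Eigenfunction} to the second-order linear ODE
\begin{equation*}
    \varphi'' + \frac{k}{\rho}\coth\!\left(\frac{r}{\rho}\right)\varphi' + \mu\,\varphi = 0\,,
\end{equation*}
which has a regular singular point at $r=0$ because $\coth(r/\rho)\sim\rho/r$ there. An indicial-equation analysis gives exponents $0$ and $1-k$, so the two-dimensional solution space splits into one branch that extends to a smooth function across the origin and a second branch singular there (blowing up like $r^{-(k-1)}$ for $k\ge 2$, or logarithmically for $k=1$). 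The $C^2(\overline{M})$ requirement of the Dirichlet problem forces the regular branch, which is one-dimensional, so the single normalization $\varphi(0)=1$ pins down the solution uniquely. Isolating this regular branch rigorously is the crux; once it is done, the eigenfunction constructed in the first two paragraphs must coincide with it, completing the proof.
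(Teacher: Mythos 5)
Your proposal follows essentially the same route as the paper: existence comes from Proposition~\ref{eigenfunction-omega-proposition} combined with Lemma~\ref{Laplacian-Commutes-lemma}, the normalization from $\omega\to1$ as $\eta\to0$, and uniqueness from the regular-singular-point analysis of the radial ODE at $r=0$ (which the paper simply cites from Olver and Chavel rather than sketching the indicial exponents $0$ and $1-k$ as you do). Your argument is correct, and your explicit treatment of the singular branch is exactly the content of the cited references.
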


\begin{proof}
    Recall that $\eta=|m|$ is the Euclidean distance between $m=(X,T)\in B(\rho)$ and the origin, while $r=r(m)=r(\eta)$ is the hyperbolic distance between the origin and $m$. Therefore, the relationship between $r$ and $\eta$ is
\begin{equation}\label{Eucli-Hype-Coordinates-relationship}
    r=\rho\ln\frac{\rho+\eta}{\rho-\eta}\quad\text{or}\quad\eta=\rho\tanh\left(\frac{r}{2\rho}\right)\,,
\end{equation}
which justifies the last expression in \eqref{Radial-func-presentation}.

Recall also that according to \eqref{Omega-as-Eigenfunction}, $\omega^\alpha$ is the eigenfunction of the hyperbolic Laplacian with the eigenvalue $(\alpha k-\alpha^2)/\rho^2$. Therefore, according to Lemma \ref{Laplacian-Commutes-lemma}, p.~\pageref{Laplacian-Commutes-lemma}, the radialization of $\omega^\alpha$ defined in \eqref{Radialization-of-omega} is also an eigenfunction with the same eigenvalue. Uniqueness of the radial eigenfunction $\varphi_\mu(r)$ assuming the value $1$ at the origin follows from the procedure described in \cite{Olver}, pp.~148-153 or in \cite{Chavel}, p.~272. Observing that $\varphi_\mu(0)=V_{\alpha}(0)=1$ completes the proof of Theorem~\ref{Radial-func-repre-Thoerem}.

\end{proof}


\begin{proposition}\label{Statement-A'-Proposition} Let $\alpha, \beta$ be complex numbers such that $\alpha+\beta=k$; let $\rho, \eta$ be two positive numbers such that $\rho\neq\eta$; let $m\in S^k(\eta), u\in S^k(\rho)$. Then
\begin{equation}\label{Uniq-27}
   \int\limits_{S^k(\eta)}\omega^\alpha(m,u)dS_m=
   \int\limits_{S^k(\eta)}\omega^\beta(m,u)dS_m
\end{equation}
\end{proposition}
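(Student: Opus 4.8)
The plan is to rewrite the integrand through the Geometric Interpretation $\omega = l/q$ of Theorem~\ref{Geometric-Interpretation-theorem} and then reduce the whole statement to the reflection symmetry $\int_\Sigma f(l,q)\,d\Sigma = \int_\Sigma f(q,l)\,d\Sigma$ provided by part (B) of Lemma~\ref{VarExchange-Lemma}. The exponent condition $\alpha+\beta=k$ is exactly what is needed to convert the $l\leftrightarrow q$ swap into the $\alpha\leftrightarrow\beta$ swap.

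First I would treat the configuration in which the fixed point is the one closer to the origin, i.e.\ the subcase $\eta>\rho$, so that $u$ plays the role of the fixed inner point $x$ and the variable $m$ the role of the outer point $y$ in the notation preceding Lemma~\ref{VarExchange-Lemma}. Here $\omega(m,u)=l/q$, and the change of variables \eqref{VarExchange-1} converts the integral over $S^k(\eta)$ into an integral over the unit sphere $\Sigma$ centered at $u$:
\[
    \int_{S^k(\eta)}\omega^\alpha(m,u)\,dS_m
    =\int_\Sigma\left(\frac{l}{q}\right)^\alpha\frac{2\eta}{l+q}\,q^k\,d\Sigma
    =\int_\Sigma\frac{2\eta}{l+q}\,l^\alpha q^{k-\alpha}\,d\Sigma.
\]
Writing $F_\alpha(l,q)=\frac{2\eta}{l+q}\,l^\alpha q^{k-\alpha}$ (a continuous, hence integrable, function of $\psi$ on $[0,\pi]$ since $q,l,l+q>0$ throughout), the relation $\alpha+\beta=k$ gives the one-line identity
\[
    F_\alpha(q,l)=\frac{2\eta}{l+q}\,q^\alpha l^{k-\alpha}
    =\frac{2\eta}{l+q}\,l^\beta q^{k-\beta}=F_\beta(l,q).
\]
Applying part (B) of Lemma~\ref{VarExchange-Lemma} then yields $\int_\Sigma F_\alpha(l,q)\,d\Sigma=\int_\Sigma F_\alpha(q,l)\,d\Sigma=\int_\Sigma F_\beta(l,q)\,d\Sigma$, which is precisely \eqref{Uniq-27} in this subcase.

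The remaining subcase $\eta<\rho$, in which the variable point $m$ lies on the inner sphere, is the only place that requires extra care, and this is the step I expect to be the main obstacle: the change of variables \eqref{VarExchange-1} is designed for a fixed inner point and a variable outer point, so it cannot be applied verbatim. To circumvent this I would first invoke the Sphere exchange rule \eqref{ExchaSphere-2} with $g(t)=t^\alpha$, which transports $\int_{S^k(\eta)}\omega^\alpha(m,u)\,dS_m$ (with $u\in S^k(\rho)$ fixed) into an integral over $S^k(\rho)$ with a fixed inner point on $S^k(\eta)$, up to the $\alpha$-independent factor $(\eta/\rho)^k$. Since this prefactor does not involve $\alpha$, I can apply the outer-sphere result already established to the transported integral and then transport back, obtaining the equality for $\alpha$ and $\beta$ simultaneously. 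The care needed is purely in tracking the $\rho^k$ and $\eta^k$ weights and keeping straight which point is fixed and which varies across the two applications of the exchange rule; the analytic content is carried entirely by the $l\leftrightarrow q$ symmetry of Lemma~\ref{VarExchange-Lemma}(B).
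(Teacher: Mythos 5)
Your proof is correct, but it takes a genuinely different route from the paper's. The paper observes that (after normalizing by $|S^k(\eta)|$) both sides of \eqref{Uniq-27} are radializations of eigenfunctions of the hyperbolic Laplacian with the same eigenvalue $\lambda=\alpha(\alpha-k)/\rho^2=\beta(\beta-k)/\rho^2$ and the same value $1$ at the origin, and then invokes the uniqueness of the radial eigenfunction from Theorem~\ref{Radial-func-repre-Thoerem}; the case $\eta>\rho$ is reduced to $\eta<\rho$ by the sphere exchange rule. You instead prove the identity by direct geometric computation: the change of variables \eqref{VarExchange-1} turns the integral into $\int_\Sigma \frac{2\eta}{l+q}\,l^\alpha q^{k-\alpha}\,d\Sigma$, the hypothesis $\alpha+\beta=k$ makes the $l\leftrightarrow q$ swap equivalent to the $\alpha\leftrightarrow\beta$ swap, and Lemma~\ref{VarExchange-Lemma}(B) finishes it; you then reduce the other radius regime ($\eta<\rho$, where the variable point is on the inner sphere) to this one via the sphere exchange rule, with the $(\eta/\rho)^k$ prefactors cancelling because they are independent of $\alpha$. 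Amusingly, the two arguments use the exchange rule in opposite directions: the paper's native case is $\eta<\rho$ and yours is $\eta>\rho$. What your approach buys is that it is self-contained and elementary --- it needs no appeal to the uniqueness theory for the radial ODE and makes visible exactly why the condition $\alpha+\beta=k$ is the right one (it is the exponent $k$ in the Jacobian factor $q^k$); what the paper's approach buys is brevity and a conceptual explanation (both exponents give the same eigenvalue, so the radializations must agree). Your argument is sound as written; the only points worth making explicit are that $l$ and $q$ are bounded away from $0$ on $\Sigma$ (since $lq=|\rho^2-\eta^2|>0$ and $l+q$ is bounded), so that $l^\alpha q^{k-\alpha}$ is continuous and Lemma~\ref{VarExchange-Lemma}(B) applies, and that $(l/q)^\alpha q^k=l^\alpha q^{k-\alpha}$ holds for complex $\alpha$ because $l,q>0$.
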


\begin{proof}[Proof of Proposition \ref{Statement-A'-Proposition}.] Let us observe that the equivalent form of the identity in \eqref{Uniq-27} can be written as $V_\alpha(|m|)=V_\beta(|m|)$, where $V$ was defined in \eqref{Radialization-of-omega}. Note also that $V_\alpha(|m|)$ as well as $V_\beta(|m|)$, according to \eqref{Laplacian-Commutes}, are the radial eigenfunctions of the Hyperbolic Laplacian with the same eigenvalue
\begin{equation}\label{Uniq-28}
   \lambda=\frac{\alpha(\alpha-k)}{\rho^2}=\frac{\beta(\beta-k)}{\rho^2}\,,
\end{equation}
since $\alpha+\beta=k$. In addition, $V_\alpha(0)=V_\beta(0)=1$. According to Theorem~\ref{Radial-func-repre-Thoerem}, p.~\pageref{Radial-func-repre-Thoerem}, $V_\alpha(|m|)=V_\beta(|m|)$ for all $|m|=\eta<\rho$, which is equivalent to \eqref{Uniq-27} for all $\eta<\rho$. To see that \eqref{Uniq-27} remains true for $\eta>\rho$, apply Lemma~\ref{Basic_lemma} from p.~\pageref{Basic_lemma}. This completes the proof of Proposition~\ref{Statement-A'-Proposition}.
\end{proof}

\begin{corollary}\label{imaginary-part-vanishing-corollary}
If $\alpha=k/2+ib$, then
\begin{equation}\label{imaginary-part-vanishing}
    \int\limits_{S^k}\omega^\alpha dS_y=
    \int\limits_{S^k}\omega^{k/2}\cos(b\ln\omega) dS_y\,.
\end{equation}
\end{corollary}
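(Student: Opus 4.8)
The plan is to derive this directly from Proposition~\ref{Statement-A'-Proposition} by choosing the complementary exponent. Given $\alpha=k/2+ib$, set $\beta=k-\alpha=k/2-ib$. Then $\alpha+\beta=k$, so the hypothesis of Proposition~\ref{Statement-A'-Proposition} is satisfied, and I immediately obtain the identity
\begin{equation}\label{conj-exponent-equality}
    \int\limits_{S^k}\omega^\alpha\,dS_y=\int\limits_{S^k}\omega^\beta\,dS_y\,.
\end{equation}
The point is that $\beta$ is exactly the complex conjugate of $\alpha$, which will let me extract the cosine.

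Next I would rewrite the complex powers in terms of the real logarithm. Since $\omega=\omega(u,m)$ is real and nonnegative (it is defined as an absolute value, and is strictly positive away from a set of measure zero on $S^k$), the quantity $\ln\omega$ is real wherever it matters, and the complex power is unambiguously $\omega^\alpha=\omega^{k/2}e^{ib\ln\omega}$, with likewise $\omega^\beta=\omega^{k/2}e^{-ib\ln\omega}$. Adding the two and using Euler's formula gives the pointwise identity
\begin{equation}\label{euler-cosine}
    \omega^\alpha+\omega^\beta=\omega^{k/2}\bigl(e^{ib\ln\omega}+e^{-ib\ln\omega}\bigr)=2\,\omega^{k/2}\cos(b\ln\omega)\,.
\end{equation}

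Finally I would integrate \eqref{euler-cosine} over $S^k$ and substitute \eqref{conj-exponent-equality}. On the one hand, integrating the left side and using that the two summands have equal integrals gives $\int_{S^k}(\omega^\alpha+\omega^\beta)\,dS_y=2\int_{S^k}\omega^\alpha\,dS_y$; on the other hand, integrating the right side gives $2\int_{S^k}\omega^{k/2}\cos(b\ln\omega)\,dS_y$. Equating these and dividing by $2$ yields exactly \eqref{imaginary-part-vanishing}, completing the proof. There is essentially no serious obstacle here: the only point requiring any care is the observation that $\omega\geq 0$, so that $\ln\omega$ is real and the decomposition of $\omega^\alpha$ into modulus and phase is legitimate almost everywhere on $S^k$; once that is noted, the corollary is a one-line consequence of the symmetry $\alpha\leftrightarrow k-\alpha$ established in Proposition~\ref{Statement-A'-Proposition}.
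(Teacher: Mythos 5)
Your argument is correct and is precisely the intended derivation: the paper states the corollary without proof immediately after Proposition~\ref{Statement-A'-Proposition}, and the evident route is exactly yours --- take $\beta=k-\alpha=\overline{\alpha}$, invoke the proposition, and average $\omega^{\alpha}+\omega^{\beta}=2\,\omega^{k/2}\cos(b\ln\omega)$. Your remark that $\omega>0$ (so $\ln\omega$ is real) is the only point needing care, and you handle it.
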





\section {Radial eigenfunctions vanishing \\ at some finite point.}

In this section we describe the radial eigenfunctions corresponding to real eigenvalues and vanishing at some finite radius $r$. We obtain also all radial eigenfunctions together with their eigenvalues for the Dirichlet Eigenvalue Problem in a hyperbolic 3-dimensional disk.

\subsection {Representations of a radial eigenfunction \\ vanishing  at some finite point.}

We shall see here the explicit representation of a radial eigenfunction corresponding to a real eigenvalue and vanishing at a finite radius.

\begin{theorem}\label{Radial-Vanishing-EF-Theorem} Let $\lambda$ be real. If $\varphi_\lambda(r)$ is a radial eigenfunction vanishing at some $r<\infty$ then $\lambda>-\kappa k^2/4$ or equivalently,
\begin{equation}\label{Real-Lambda-and-alpha-b-relation}
   \lambda=\frac{\alpha k-\alpha^2}{\rho^2}\quad\text{with}\quad \alpha=\frac k2+ib\quad\text{and}\quad b\neq0\,.
\end{equation}
Moreover,
\begin{equation}\label{Radial-Vanishing-EF}
    \varphi_\lambda(r(\eta))=\frac{1}{|S(\eta)|}\int\limits_{S(\eta)}\omega^{k/2\pm ib}(u,m)dS_m\,,
\end{equation}
where $b=\sqrt{\frac{\lambda}{-\kappa}-\frac{k^2}{4}}>0$
or, equivalently,
\begin{equation}\label{Radial-Vanishing-EF-cos}
    \varphi_\lambda(r(\eta))=\frac{1}{|S(\eta)|}\int\limits_{S(\eta)}\omega^{k/2}\cos\left( \sqrt{\frac{\lambda}{-\kappa}-\frac{k^2}{4}}\ln\omega \right) dS_m\,,
\end{equation}
where $\rho=|u|$, $\eta=|m|$, $r(\eta)=\rho\ln\left[(\rho+\eta)/(\rho-\eta)\right]$, $\theta=\pi-\widehat{uOm}$ and
\begin{equation}\label{Three_D_Dirichlet-2}
    \omega=\omega(u,m)=\frac{\rho^2-\eta^2}{|u-m|^2}=
    \frac{\rho^2-\eta^2}{\rho^2+\eta^2+2\eta\rho\cos\theta}=\omega(\eta,\theta)\,.
\end{equation}
\end{theorem}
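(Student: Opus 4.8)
The plan is to combine the explicit radial eigenfunction representation from Theorem~\ref{Radial-func-repre-Thoerem} with the symmetry Proposition~\ref{Statement-A'-Proposition} to pin down exactly which eigenvalues admit a radial eigenfunction that can vanish at a finite radius. Recall from \eqref{Omega-as-Eigenfunction} and \eqref{Radial-func-presentation} that every radial eigenfunction assuming the value $1$ at the origin has the form $\varphi_\mu(r)=V_\alpha(\eta)$ with $\mu=(\alpha k-\alpha^2)/\rho^2$, and that this $\varphi_\mu$ is the \emph{unique} such eigenfunction for its eigenvalue. The strategy is therefore: first establish the dichotomy on $\lambda$, then exhibit the integral representation as the corresponding $V_\alpha$, and finally record the equivalent $\cos$ form via Corollary~\ref{imaginary-part-vanishing-corollary}.

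First I would solve the quadratic $\alpha k-\alpha^2=\rho^2\mu=-\rho^2\kappa^{-1}\lambda\cdot\kappa=\lambda\rho^2$ (using $\kappa=-1/\rho^2$) for $\alpha$, obtaining $\alpha=\frac{k}{2}\pm\sqrt{\frac{k^2}{4}-\lambda\rho^2}=\frac{k}{2}\pm\sqrt{\frac{k^2}{4}-\frac{\lambda}{-\kappa}}$. The key observation is the trichotomy governed by the sign of $\frac{\lambda}{-\kappa}-\frac{k^2}{4}$. When this quantity is negative or zero (i.e.\ $\lambda\le -\kappa k^2/4$), $\alpha$ is real, the two roots satisfy $\alpha+\beta=k$, and by Proposition~\ref{Statement-A'-Proposition} they give the \emph{same} radialization; one must then argue that such a real-exponent $V_\alpha(\eta)$ is strictly positive for $0\le\eta<\rho$ and hence cannot vanish at any finite $r$. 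Indeed, since $0<\omega$ throughout the disk and $\omega^\alpha>0$ pointwise for real $\alpha$, the integral in \eqref{Radialization-of-omega} is a positive average of positive quantities, so it has no zero. This forces $\frac{\lambda}{-\kappa}-\frac{k^2}{4}>0$, giving $\alpha=\frac{k}{2}+ib$ with $b=\sqrt{\frac{\lambda}{-\kappa}-\frac{k^2}{4}}>0$, which is exactly \eqref{Real-Lambda-and-alpha-b-relation}.

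Next, for the representation \eqref{Radial-Vanishing-EF}, I would simply invoke Theorem~\ref{Radial-func-repre-Thoerem}: with $\alpha=k/2\pm ib$ the eigenvalue is $\mu=(\alpha k-\alpha^2)/\rho^2=\lambda$, so $\varphi_\lambda(r(\eta))=V_\alpha(\eta)$ is precisely the integral $\frac{1}{|S(\eta)|}\int_{S(\eta)}\omega^{k/2\pm ib}(u,m)\,dS_m$. That the $+ib$ and $-ib$ choices yield the same real-valued function again follows from Proposition~\ref{Statement-A'-Proposition}, since $\alpha+\bar\alpha=k$; equivalently, taking real parts and using $\omega>0$ converts $\omega^{k/2+ib}=\omega^{k/2}e^{ib\ln\omega}$ into $\omega^{k/2}\cos(b\ln\omega)$ after the imaginary part cancels in the average, which is the content of Corollary~\ref{imaginary-part-vanishing-corollary} and gives \eqref{Radial-Vanishing-EF-cos} directly. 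The remaining geometric formula \eqref{Three_D_Dirichlet-2} is just the law of cosines applied to $|u-m|^2$ with $\theta=\pi-\widehat{uOm}$.

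I expect the main obstacle to be the \emph{nonvanishing} argument in the real-$\alpha$ case, since the clean conclusion "$\lambda>-\kappa k^2/4$" rests entirely on showing that a radial eigenfunction with real exponent cannot have a finite zero. The pointwise-positivity observation handles $\alpha$ real and positive immediately, but the borderline $\lambda=-\kappa k^2/4$ (double root $\alpha=k/2$) and any case where one root is negative need a brief separate check that $V_\alpha(\eta)$ stays bounded away from zero as $\eta\to\rho^-$; I would address this by noting that $V_\alpha$ is the unique eigenfunction with $V_\alpha(0)=1$ and that, for these subcritical eigenvalues, comparison with the known hyperbolic radial solutions (which are monotone and positive, consistent with McKean's bound \eqref{Mckean_Estimation} and Randol's limit) precludes a sign change. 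Everything after the dichotomy is then a direct application of the already-established representation theorems.
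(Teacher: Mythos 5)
Your proposal follows essentially the same route as the paper: parametrize $\lambda=-\kappa(\alpha k-\alpha^2)$, observe that $\lambda\le-\kappa k^2/4$ forces $\alpha$ real and hence $V_\alpha(\eta)=\frac{1}{|S(\eta)|}\int_{S(\eta)}\omega^\alpha\,dS_m>0$ for every $\eta\in[0,\rho)$, so a finite zero forces $\alpha=k/2+ib$ with $b\neq0$, after which Theorem~\ref{Radial-func-repre-Thoerem} and Corollary~\ref{imaginary-part-vanishing-corollary} give \eqref{Radial-Vanishing-EF} and \eqref{Radial-Vanishing-EF-cos} exactly as in the paper. The separate check you anticipate for the borderline double root $\alpha=k/2$ and for a negative real root is unnecessary: pointwise positivity of $\omega^\alpha$ for any real $\alpha$ already yields $V_\alpha(\eta)>0$ at every $\eta<\rho$, which is all the theorem requires, since vanishing at a finite $r$ corresponds to $\eta$ strictly less than $\rho$ and the behavior as $\eta\to\rho^-$ plays no role.
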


\begin{proof} Let us assume first that $\varphi_\lambda(r)=0$ for some finite $r$. Recall that according to Theorem~\ref{Radial-func-repre-Thoerem} from p.~\pageref{Radial-func-repre-Thoerem}, the unique radial eigenfunction with an eigenvalue $\lambda\in\mathbb{C}$ is
\begin{equation}\label{Three_D_Dirichlet-3}
    \varphi_\lambda(r)=V_\alpha(\eta(r))=\frac{1}{|S(\eta)|}\int\limits_{S(\eta)}\omega^\alpha dS_m\,,
\end{equation}
where
\begin{equation}\label{Three_D_Dirichlet-4}
    |m|=\eta(r)=\rho\tanh\left(\frac{r}{2\rho}\right)\quad\text{and}\quad\lambda=
    \frac{\alpha k-\alpha^2}{\rho^2}=-\kappa(\alpha k-\alpha^2)\,.
\end{equation}
Let $\alpha=a+ib$. Then
\begin{equation}\label{Three_D_Dirichlet-5}
    \lambda=-\kappa\left( (a+ib)k-(a+ib)^2\right)=-\kappa\left[ak+b^2-a^2+ib(k-2a)\right]\,.
\end{equation}
Therefore, $\lambda$ is real if and only if
\begin{equation}\label{Three_D_Dirichlet-6}
    \alpha\in\gimel
    =\{(a+ib)|\,\, a=k/2\,\, \text{or}\,\, b=0\}
\end{equation}
or equivalently, if
\begin{equation}\label{Iff-conditions-for-Lambda-real}
    \alpha=\frac k2\pm s\quad\text{or}\quad\alpha=\frac k2\pm ib\,,\quad\text{where}\,\,s,b\in\mathbb{R}\,.
\end{equation}
It is clear that $\gimel$ is mapped to the real line $\lambda$ as it is pictured on Figure~\ref{Cross-Image} below.

\begin{figure}[h]
  \center\epsfig{figure=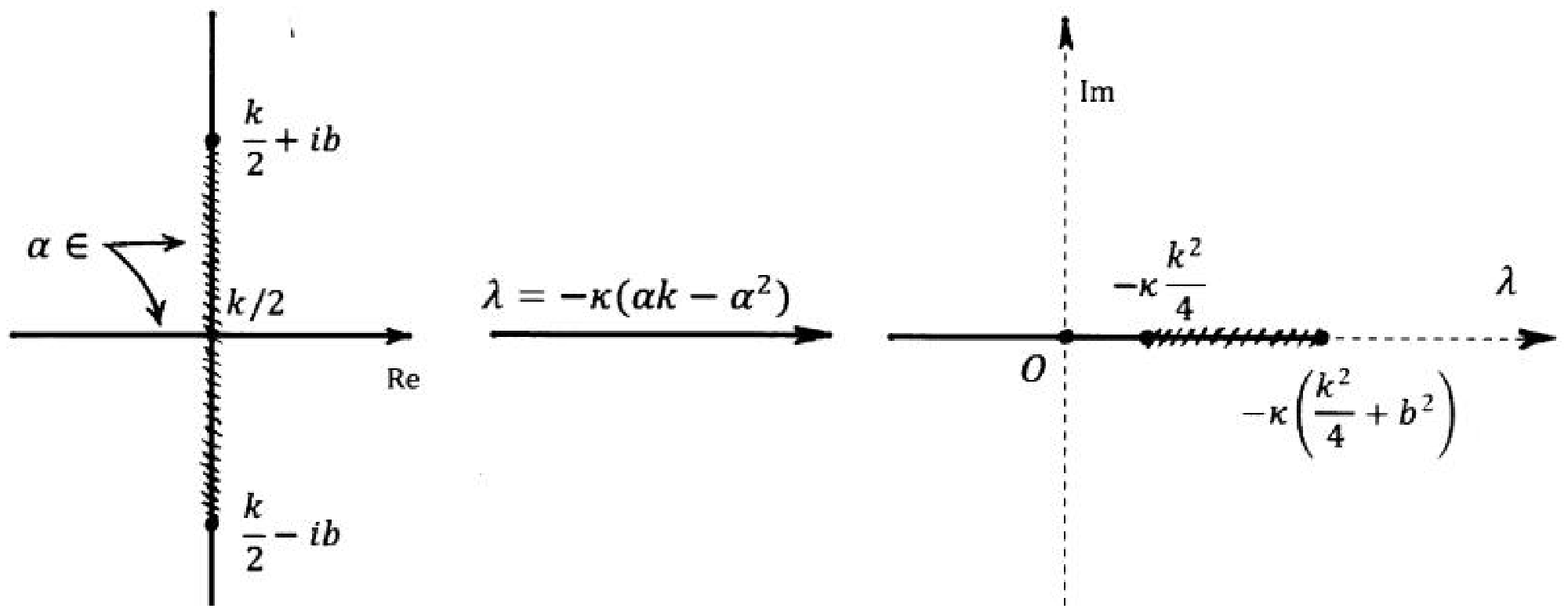, height=6.5cm, width=15cm}
  \caption{The image of the cross $\gimel$.}\label{Cross-Image}
\end{figure}

Using \eqref{Three_D_Dirichlet-5} and \eqref{Three_D_Dirichlet-6} we observe that $\alpha$ is real if and only if
\begin{equation}\label{Three_D_Dirichlet-7}
    \lambda\in\mathbb{R}\quad\text{and}\quad\lambda\leq-\kappa k^2/4\,.
\end{equation}
Therefore, for every $\lambda$ satisfying \eqref{Three_D_Dirichlet-7} the radial eigenfunction does not vanish for any finite $r$ because
\begin{equation}\label{Three_D_Dirichlet-8}
    \varphi_\lambda(r)=V_\alpha(\eta(r))=\frac{1}{|S(\eta)|}\int\limits_{S(\eta)}\omega^\alpha dS_m>0
\end{equation}
for every $\eta\in[0,\rho)$ and for every real $\alpha$. Thus, to allow the radial eigenfunction to vanish at a finite $r=\delta$, $\lambda$ must be strictly greater than $-\kappa k^2/4$. Again, using \eqref{Three_D_Dirichlet-5} and \eqref{Three_D_Dirichlet-6} we can observe that
\begin{equation}\label{Three_D_Dirichlet-9}
    \lambda>-\kappa \frac{k^2}{4}\quad
\end{equation}
yields $\eqref{Radial-Vanishing-EF}$. Note also that $\lambda=-\kappa\left(\alpha k-\alpha^2\right)$ and $\alpha=k/2+ib$ imply together that $b=\pm\sqrt{-\lambda/\kappa-k^2/4}$. Formula~\eqref{imaginary-part-vanishing} of Corollary~\ref{imaginary-part-vanishing-corollary}, p.~\pageref{imaginary-part-vanishing-corollary}, applied to \eqref{Radial-Vanishing-EF} leads directly to~\eqref{Radial-Vanishing-EF-cos}. Therefore, for a vanishing radial eigenfunction all the formulae \eqref{Real-Lambda-and-alpha-b-relation}, \eqref{Radial-Vanishing-EF}, \eqref{Radial-Vanishing-EF-cos}, p.~\pageref{Radial-Vanishing-EF-Theorem} as well as $\lambda>-\kappa k^2/4$ must hold.

\end{proof}

\subsection {Geometric representations of a radial eigenfunction vanishing at a finite point.}\

The formulae presented in Theorem \ref{Radial-Vanishing-EF-Theorem}, p.~\pageref{Radial-Vanishing-EF-Theorem}, lead us to the following geometric presentations of radial eigenfunctions. All notations used in the following lemma are pictured on Figure~\ref{Euclidean-Notations-in-HD} below.

\begin{lemma}\label{Lemma-532} Geometric representation of radial eigenfunctions can be described as follows.

\begin{description}
  \item[\textbf{(A)}] A radial eigenfunction can be expressed as
\begin{equation}\label{Three_D_Dirichlet-15}
   \varphi_\lambda(r(\eta))=\frac{1}{|S(\rho)|}
   \int\limits_{S(\rho)}\omega^\alpha dS_u=
   \frac{1}{|S(\rho)|}
   \int\limits_{\Sigma}\left(\frac{l}{q}\right)^\alpha\frac{2\rho}{l+q}q^k d\Sigma_{\widetilde{u}}\,,
\end{equation}
where $\alpha$ is chosen in such a way that $\lambda=(\alpha k-\alpha^2)/\rho^2$.

  \item[\textbf{(B)}] If a radial eigenfunction has a real eigenvalue and vanishes at some finite radius $r_0<\infty$, then
\begin{equation}\label{Three_D_Dirichlet-16}
    \varphi_\lambda(r(\eta))
    =\frac{1}{|S(\rho)|}\int\limits_{S(\rho)}\left(\frac{l}{q}\right)^{k/2}
    \cos\left(b\ln\frac{l}{q}\right)dS_{u} \,,
\end{equation}
where $u\in S^k(\rho)$ is the parameter of integration,
\begin{equation}\label{Three_D_Dirichlet-17}
    \frac{l}{q}=\frac{|m-u|}{|m-u^*|}=\omega(u,m)\quad\text{and}\quad
    b=\pm\sqrt{\frac{\lambda}{-\kappa}-\frac{k^2}{4}}\,.
\end{equation}

\item[\textbf{(C)}] The integral given in \eqref{Three_D_Dirichlet-16} may be transformed to the following form
\begin{equation}\label{Three_D_Dirichlet-18}
    \varphi_\lambda(r(\eta))
    =\frac{4\rho(\rho^2-\eta^2)^{k/2}\sigma_{k-1}}{|S(\rho)|}\int\limits_0^{\pi/2}
    \frac{\sin^{k-1}\psi}{l+q}\cos\left(b\ln\frac{l}{q}\right)d\psi\,,
\end{equation}
which also represent a vanishing radial eigenfunction, with $b$ defined in~\eqref{Three_D_Dirichlet-17}.
\end{description}
\end{lemma}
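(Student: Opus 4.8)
The plan is to derive the three representations in sequence, each from the previous one together with the machinery assembled in the preceding sections.

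For part \textbf{(A)}, I would start from Theorem~\ref{Radial-func-repre-Thoerem}, which gives $\varphi_\lambda(r(\eta))=V_\alpha(\eta)=\frac{1}{|S(\eta)|}\int_{S(\eta)}\omega^\alpha\,dS_m$ with $\lambda=(\alpha k-\alpha^2)/\rho^2$. To move the integration from the inner sphere $S(\eta)$ to the outer sphere $S(\rho)$, I would invoke the Sphere exchange rule \eqref{ExchaSphere-2} with $g(t)=t^\alpha$, $r=\eta$, $R=\rho$: this yields $\rho^k\int_{S(\eta)}\omega^\alpha\,dS_m=\eta^k\int_{S(\rho)}\omega^\alpha\,dS_u$, and after dividing by $\sigma_k\eta^k\rho^k$ the factors $\eta^k$ cancel and $|S(\eta)|^{-1}$ becomes $|S(\rho)|^{-1}$, giving the first equality. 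The second equality is then a direct application of the volume-element rule $dS_u=\frac{2\rho}{l+q}q^k\,d\Sigma_{\widetilde u}$ from \eqref{VarExchange-1} together with the Geometric Interpretation $\omega=l/q$ of Theorem~\ref{Geometric-Interpretation-theorem}, identifying $x=m$, $y=u$, $R=\rho$.

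For part \textbf{(B)}, I would specialize part (A) to a vanishing radial eigenfunction. By Theorem~\ref{Radial-Vanishing-EF-Theorem} such an eigenfunction forces $\alpha=k/2\pm ib$ with $b=\sqrt{\lambda/(-\kappa)-k^2/4}$. Since $\lambda$ is real the eigenfunction is real-valued, and Proposition~\ref{Statement-A'-Proposition} with $\beta=k-\alpha=\overline{\alpha}$ shows $V_\alpha=V_{\overline\alpha}$, so $\varphi_\lambda=\mathrm{Re}\,V_\alpha$. Passing the real part under the integral and using $\mathrm{Re}\,\omega^{k/2+ib}=\omega^{k/2}\cos(b\ln\omega)$ (equivalently, Corollary~\ref{imaginary-part-vanishing-corollary}), then rewriting $\omega=l/q$, produces \eqref{Three_D_Dirichlet-16}.

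For part \textbf{(C)}, I would feed the integrand of \eqref{Three_D_Dirichlet-16}, which depends on $u$ only through $\omega=l/q$, into the same change of variables $dS_u=\frac{2\rho}{l+q}q^k\,d\Sigma_{\widetilde u}$. The key algebraic simplification is $\left(\tfrac{l}{q}\right)^{k/2}q^k=(lq)^{k/2}=(\rho^2-\eta^2)^{k/2}$, where I use $lq=\rho^2-\eta^2$ from \eqref{Differention-2}; this pulls the constant $(\rho^2-\eta^2)^{k/2}$ out of the integral. I would then reduce the integral over $\Sigma$ to an integral in $\psi$ by \eqref{VarExchange-10}, replacing $d\Sigma_{\widetilde u}$ by $\sigma_{k-1}\sin^{k-1}\psi\,d\psi$ over $[0,\pi]$. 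The final and most delicate step is folding $[0,\pi]$ onto $[0,\pi/2]$: by the corollary to Lemma~\ref{Feature-Feature-lemma} recorded in \eqref{Three_D_Dirichlet-29}, $l+q$ and $\sin^{k-1}\psi$ are symmetric about $\psi=\pi/2$ while $\ln(l/q)$ is antisymmetric, so $\cos(b\ln(l/q))$ is symmetric because cosine is even. Hence the whole integrand is symmetric about $\pi/2$, the integral over $[0,\pi]$ equals twice that over $[0,\pi/2]$, and the prefactor $2\rho$ becomes $4\rho$, yielding \eqref{Three_D_Dirichlet-18}. I expect this symmetry-folding — turning the odd function $\ln(l/q)$ into an even integrand through the cosine — to be the main point requiring care; the remainder is bookkeeping of measures and constants.
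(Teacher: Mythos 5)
Your proposal is correct and follows essentially the same route as the paper: part (A) via the sphere exchange rule of Lemma~\ref{Basic_lemma} and the volume-element formula \eqref{VarExchange-1}, part (B) via the $\alpha=k/2\pm ib$ characterization and the cosine identity of Corollary~\ref{imaginary-part-vanishing-corollary} (the paper simply cites the already-derived formula \eqref{Radial-Vanishing-EF-cos} rather than re-deriving it from Proposition~\ref{Statement-A'-Proposition}), and part (C) via $lq=\rho^2-\eta^2$, the reduction to a $\psi$-integral, and the symmetry about $\psi=\pi/2$ from \eqref{Three_D_Dirichlet-29}. All steps, including the folding of $[0,\pi]$ onto $[0,\pi/2]$ that produces the factor $4\rho$, match the paper's argument.
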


\begin{remark}
We shall see that the last integral formula can be simplified using integration by parts for $k>1$. In particular, for $k=2$ this integral can be computed explicitly.
\end{remark}

\begin{proof}[Proof of Lemma \ref{Lemma-532}, Statement (A)] Notice that the restrictions $u\in S(\rho)$ and $m\in S(\eta)$ make $\omega^\alpha(u,m)$ a function depending only on the distance between $u$ and $m$, since
\begin{equation}\label{Three_D_Dirichlet-19}
    \omega^\alpha(u,m)=(\rho^2-\eta^2)^\alpha\cdot\frac{1}{|u-m|^{2\alpha}}=g(|u-m|)\,.
\end{equation}
Therefore, we can apply the Lemma~\ref{Basic_lemma}, p.~\pageref{Basic_lemma} to $\omega^\alpha(u,m)$. Let $u\in S(\rho)$ and $m_1\in S(\eta)$ pictured below serve as the parameters of integration, while $u_1\in S^k(\rho)$ is a fixed point. Let $\Sigma$ be the unit sphere centered at point $m$ and $u^*$ is defined as the intersection of $\Sigma$ and segment $mu$; $\psi=\angle Omu$.

\begin{figure}[h]
  \center\epsfig{figure=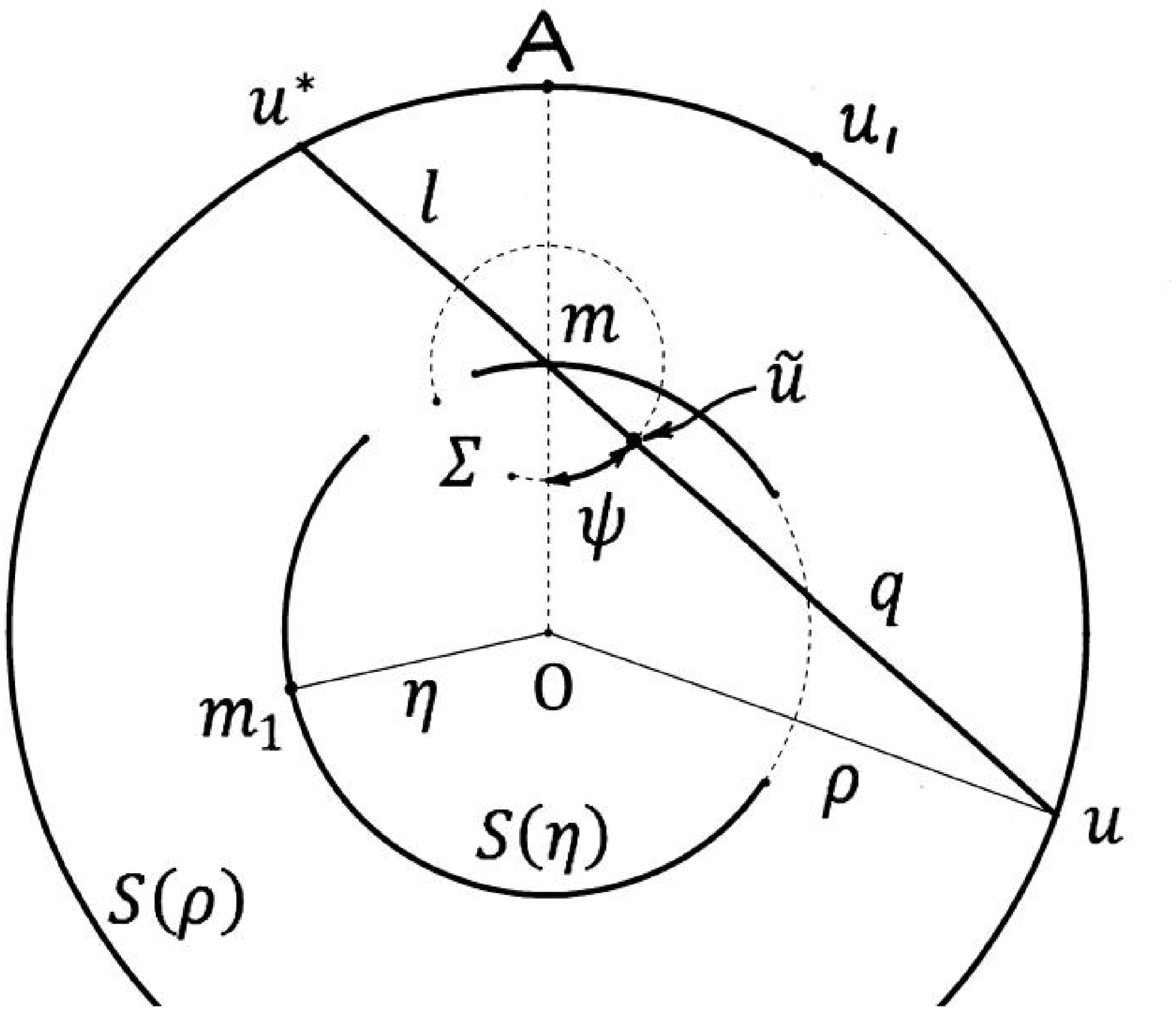, height=6cm, width=9cm}
  \caption{Euclidean variables in the hyperbolic disc.}\label{Euclidean-Notations-in-HD}
\end{figure}

Recall also that according to the geometric interpretation of $\omega$ presented in Theorem~\ref{Geometric-Interpretation-theorem}, p.~\pageref{Geometric-Interpretation-theorem}, we have
\begin{equation}\label{Three_D_Dirichlet-20}
    \omega(u,m)=\frac{|u^*-m|}{|m-u|}=\frac{l}{q}\,,
\end{equation}
where $u^*$ is the intersection of $S(\rho)$ and the line defined by $m$ and $u$.

Using the representation \eqref{Radial-func-presentation} of p.~\pageref{Radial-func-presentation} for a radial eigenfunction,  Lemma~\ref{Basic_lemma} of p.~\pageref{Basic_lemma} and the second exchange rule of~\eqref{VarExchange-1}, p.~\pageref{VarExchange-1}, we obtain the following sequence of equalities
\begin{equation}\label{Three_D_Dirichlet-21}
\begin{split}
     \varphi_\lambda(r)
    & =V_\alpha(\eta(r))=
    \frac{1}{|S(\eta)|}\int\limits_{S(\eta)}\omega^\alpha(u_1,m_1) dS_{m_1}
    \\& =\frac{1}{|S(\rho)|}\int\limits_{S(\rho)}\omega^\alpha(u,m) dS_{u}=
         \frac{1}{|S(\rho)|}\int\limits_\Sigma\left(\frac{l}{q}\right)^\alpha\frac{2\rho}{l+q}q^k d\Sigma_{\widetilde{u}}\,,
\end{split}
\end{equation}
where the third equality is a consequence of Lemma~\ref{Basic_lemma}. This completes the proof of the Statement~(A) of Lemma~\ref{Lemma-532}.
\end{proof}

\begin{proof}[Proof of Lemma \ref{Lemma-532}, Statement (B)]

Using the representation of a radial eigenfunction vanishing at some finite point obtained in Theorem~\ref{Radial-Vanishing-EF-Theorem}, see formula~\eqref{Radial-Vanishing-EF-cos}, p.~\pageref{Radial-Vanishing-EF-cos}, and then Lemma~\ref{Basic_lemma}, p.~\pageref{Basic_lemma}, we have
\begin{equation}\label{Three_D_Dirichlet-22}
\begin{split}
    & \varphi_\lambda(r(\eta))=\frac{1}{|S(\eta)|}\int\limits_{S(\eta)}\omega^{k/2}(u_1,m_1)\cos\left( \sqrt{\frac{\lambda}{-\kappa}-\frac{k^2}{4}}\ln\omega \right) dS_{m_1}
    \\& =\frac{1}{|S(\rho)|}\int\limits_{S(\rho)}\omega^{k/2}(u,m)\cos\left( \sqrt{\frac{\lambda}{-\kappa}-\frac{k^2}{4}}\ln\omega \right) dS_{u}    \,,
\end{split}
\end{equation}
where, according to \eqref{Three_D_Dirichlet-20}, $\omega=l/q$. This completes the proof of Statement~(B).
\end{proof}

\begin{proof}[Proof of Lemma \ref{Lemma-532}, Statement (C)]

 As we saw in Statement (B), a radial eigenfunction vanishing at some finite point and corresponding to a real eigenvalue can be written as
\begin{equation}\label{Three_D_Dirichlet-23}
    \varphi_\lambda(r(\eta))
    =\frac{1}{|S(\rho)|}\int\limits_{S(\rho)}\left(\frac{l}{q}\right)^{k/2}
    \cos\left(b\ln\frac{l}{q}\right)dS_{u} \,,
\end{equation}
where $b=\sqrt{(-\lambda/\kappa)-k^2/4}$ and the variables $l,q,u,m$ were introduced above. If we apply the argument used in the proof of Theorem~\ref{Geometric-Interpretation-theorem}, p.~\pageref{Geometric-Interpretation-theorem}, see formulas~\eqref{GeoInte-5} and~\eqref{GeoInte-6}, we observe that $lq=\rho^2-\eta^2$. According to the second formula of~\eqref{VarExchange-1}, p.~\pageref{VarExchange-1},
\begin{equation}\label{Var-Excha-New-Notation-dS_u}
    dS_u=\frac{2\rho}{l+q}q^k d\Sigma_{\widetilde{u}}\,.
\end{equation}
These two expressions for $lq$ and for $dS_u$ allow us to rewrite the integral formula \eqref{Three_D_Dirichlet-23} for $\varphi_\lambda(r(\eta))$ in the following way.
\begin{equation}\label{Three_D_Dirichlet-24}
\begin{split}
    \varphi_\lambda(r(\eta))
    & =\frac{1}{|S(\rho)|}\int\limits_{\Sigma}\left(\frac{l}{q}\right)^{k/2}
    \cos\left(b\ln\frac{l}{q}\right) \frac{2\rho}{l+q}q^k d\Sigma_{\widetilde{u}} \,,
    \\& =\frac{2\rho(\rho^2-\eta^2)^{k/2}}{|S(\rho)|}\int\limits_{\Sigma}
    \frac{\cos(b\ln l/q)}{l+q}d\Sigma_{\widetilde{u}}\,.
\end{split}
\end{equation}
For the next step we need the following observation. For an angle $\psi\in[0,\pi]$ define $\Sigma(\psi)$ as follows.
\begin{equation}\label{Three_D_Dirichlet-25}
    \Sigma(\psi)=\{\widetilde{u}\in\Sigma \mid \angle Om\widetilde{u}=\psi\}\,.
\end{equation}
It is clear that $\Sigma(\psi)$ is a $(k-1)-$dimensional sphere of radius $\sin\psi$ and for any fixed $\psi\in[0,\pi]$ the last integrand in \eqref{Three_D_Dirichlet-24} does not depend on $\widetilde{u}\in\Sigma(\psi)$. Therefore,
\begin{equation}\label{Three_D_Dirichlet-26}
    \int\limits_{\Sigma}\frac{\cos(b\ln l/q)}{l+q}d\Sigma_{\widetilde{u}}
    =\int\limits_0^{\pi}\frac{\cos(b\ln l/q)}{l+q}|\Sigma(\psi)|d\psi\,,
\end{equation}
where $|\Sigma(\psi)|=\sigma_{k-1}(\sin\psi)^{k-1}$ is the volume of the sphere $\Sigma(\psi)$ and $\sigma_{k-1}$ is the volume of $(k-1)-$dimensional unit sphere. Therefore, using \eqref{Three_D_Dirichlet-26}, we may continue the sequence of equalities in \eqref{Three_D_Dirichlet-24}, which yields
\begin{equation}\label{Three_D_Dirichlet-27}
    \varphi_\lambda(r(\eta))
    =\frac{2\rho(\rho^2-\eta^2)^{k/2}\sigma_{k-1}}{|S(\rho)|}\int\limits_0^\pi
    \frac{\sin^{k-1}\psi}{l+q}\cos\left(b\ln\frac{l}{q}\right)d\psi\,.
\end{equation}
According to \eqref{Three_D_Dirichlet-29}, p.~\pageref{Three_D_Dirichlet-29}, the last integrand is symmetric with respect to $\psi=\pi/2$. Therefore, the integral formula for a radial function presented in \eqref{Three_D_Dirichlet-27} can be written as
\begin{equation}\label{Three_D_Dirichlet-30}
    \varphi_\lambda(r(\eta))
    =\frac{4\rho(\rho^2-\eta^2)^{k/2}\sigma_{k-1}}{|S(\rho)|}\int\limits_0^{\pi/2}
    \frac{\sin^{k-1}\psi}{l+q}\cos\left(b\ln\frac{l}{q}\right)d\psi\,,
\end{equation}
which completes the proof of Statement~(C) and the proof of Lemma~\ref{Lemma-532}.
\end{proof}

\section {Lower and upper bounds for the minimal eigenvalue \\ in a Dirichlet Eigenvalue Problem.}

In this section we obtain the lower and the upper bounds for the minimal positive eigenvalue of a Dirichlet Eigenvalue Problem stated on page~\pageref{Dirichlet-Eigen-General-Chavel}. First, in Theorem~\ref{Lower-and-Upper-bound-Theorem} below, we prove the set of inequalities for $M^n=D^n(\delta)\subseteq B^n_\rho$, which is a hyperbolic disc of radius $\delta$ centered at the origin and considered in the hyperbolic space model $B^n_\rho$, where $n=k+1$. Then, in Theorem~\ref{Arbitrary-Domain-Estimation-Thm}, p.~\pageref{Arbitrary-Domain-Estimation-Thm}, we obtain a set of inequalities for an arbitrary relatively compact and connected domain $\overline{M^n}\subseteq H^n$ with $\partial M^n\neq\emptyset$.

\begin{theorem}\label{Lower-and-Upper-bound-Theorem}

Recall that $B^{k+1}_\rho$ is the ball model of $(k+1)-$dimensional hyperbolic space with a constant sectional curvature $\kappa=-1/\rho^2$ and let $\varphi_\lambda(\upsilon, r)$ satisfies the following conditions:
\begin{equation}\label{Lower_Bound-1}
\left\{
  \begin{array}{ll}
     & \hbox{$\triangle\varphi_\lambda(\upsilon,r)+\lambda \varphi_\lambda(\upsilon,r)=0 \quad
     \text{for every}\,\,\, r\in [0,\delta], \,\,\lambda - \text{real}$;} \\
     & \hbox{$\varphi_\lambda(\upsilon, \delta)=0 \quad \text{for some}\,\, \delta\in(0,\infty)$,}
  \end{array}
\right.
\end{equation}
where $\upsilon$ is a point of the unit $k-$dimensional sphere centered at the origin and $r$ is the geodesic distance between a point in $B^{k+1}_\rho$ and the origin, i.e., $(\upsilon, \delta)$ are the geodesic polar coordinates. Then the following statements hold.

\begin{itemize}
	\item[\textbf{(A)}] If $k=1$, then
\begin{equation}\label{Lower_Bound-2}
    -\kappa\frac{k^2}{4}+\left(\frac{\pi}{2\delta}\right)^2<\lambda_{\min}<
    -\kappa\frac{k^2}{4}+\left(\frac{\pi}{\delta}\right)^2 \,.
\end{equation}

	\item[\textbf{(B)}] If $k\geq 3$, then
\begin{equation}\label{Lower_Bound-4}
    \lambda_{\min} >-\kappa\frac{k^2}{4}+\left(\frac{\pi}{\delta}\right)^2\,.
\end{equation}

	\item[\textbf{(C)}] If $k=2$, then
\begin{equation}\label{Lower_Bound-3}
    \lambda_{\min} =-\kappa+\left(\frac{\pi}{\delta}\right)^2\,.
\end{equation}
Moreover, for $k=2$, all radial eigenvalues can be computed as
\begin{equation}\label{Three_D_Dirichlet-32}
    \lambda=\lambda_j=-\kappa+\left(\frac{\pi j}{\delta}\right)^2 \quad\text{for some}\,\, j=1,2,3,\cdots
\end{equation}
and for each $\lambda_j$ there is a unique radial eigenfunction assuming value one at the origin. Such radial eigenfunction is also computable explicitly as follows.
\begin{equation}\label{Expli-Diri-Solu-3-D}
    \varphi_{\lambda_j}(r)=\frac{\delta(\rho^2-\eta^2)}{2\pi\rho^2 \eta j}\cdot\sin\left(\frac{\pi jr}{\delta}\right)
    =\frac{\delta}{\pi j\rho \sinh(r/\rho)}\cdot\sin\left(\frac{\pi jr}{\delta}\right)\,,
\end{equation}
where $0\leq r\leq\delta<\infty$ and $\eta=\rho\tanh(r/2\rho)$.
	
\end{itemize}

\begin{remark} For $k=1$, a stronger upper bound was obtained by Gage, see~\cite{Gage} or see~\cite{Chavel}, p.~80, but the lower bound given in \eqref{Lower_Bound-2} is new.
\end{remark}
\end{theorem}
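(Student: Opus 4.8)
The plan is to convert the Dirichlet boundary condition into a statement about the first zero of a weighted cosine integral, and then to locate that zero by an elementary monotonicity comparison. I would start from the vanishing-eigenfunction formula \eqref{Three_D_Dirichlet-30} of Lemma~\ref{Lemma-532}(C), evaluated at $\eta=\rho\tanh(\delta/2\rho)$; since its prefactor is a nonzero positive constant, the condition $\varphi_\lambda(\delta)=0$ is equivalent to the vanishing of the integral. Recording $\lambda=-\kappa(k^2/4+b^2)$ with $b=\sqrt{\lambda/(-\kappa)-k^2/4}>0$ from Theorem~\ref{Radial-Vanishing-EF-Theorem}, I would change the variable of integration from $\psi$ to $w=\ln(l/q)$. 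By Lemma~\ref{l/q-differentiation-lemma} one has $dw=\frac{4\eta\sin\psi}{l+q}\,d\psi$, so one factor of $\sin\psi$ is absorbed into the Jacobian and
\[
\int_0^{\pi/2}\frac{\sin^{k-1}\psi}{l+q}\cos\!\Big(b\ln\frac lq\Big)\,d\psi=\frac{1}{4\eta}\int_0^{L}P(w)\cos(bw)\,dw,\qquad P(w)=\sin^{k-2}\psi(w),
\]
where $L=\delta/\rho$, because $w=0$ at $\psi=\pi/2$ and $w=\ln\frac{\rho-\eta}{\rho+\eta}=-\delta/\rho$ at $\psi=0$, and $P$ is even in $w$ by the symmetry \eqref{Three_D_Dirichlet-29}. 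Thus the boundary condition becomes $J(b):=\int_0^{L}P(w)\cos(bw)\,dw=0$, the eigenvalues increase with $b$, and $\lambda_{\min}$ is governed by the smallest positive zero $b_1$ of $J$.

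The structural fact I would exploit is that on $[0,L]$ the angle $\psi(w)$ increases from $\pi/2$ to $\pi$, so $\sin\psi$ decreases and $P=\sin^{k-2}\psi$ is constant for $k=2$, strictly decreasing for $k\ge3$, and strictly increasing for $k=1$. For $k=2$ one has $P\equiv1$ and $J(b)=\sin(bL)/b$, whose zeros are $b=j\pi/L$; substituting back through $\lambda=-\kappa(k^2/4+b^2)$ with $-\kappa=\rho^{-2}$ gives $\lambda_j=-\kappa+(\pi j/\delta)^2$, and reinserting $b=\pi j\rho/\delta$ into \eqref{Three_D_Dirichlet-30} together with $lq=\rho^2-\eta^2$ and $\eta=\rho\tanh(r/2\rho)$ reproduces \eqref{Expli-Diri-Solu-3-D}, proving (C). For the one-sided estimates I would use a single comparison: letting $w_c=\pi/(2b)$ be the first sign change of $\cos(bw)$ on $[0,L]$ (and $w_c=L$ if $\pi/(2b)\ge L$), the product $\big(P(w)-P(w_c)\big)\cos(bw)$ is $\ge0$ when $P$ is decreasing and $\le0$ when $P$ is increasing, since on $[0,w_c]$ both factors are $\ge0$ and on $[w_c,L]$ both are $\le0$ (the signs reversing for increasing $P$). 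Integrating yields $\int_0^L P\cos(bw)\,dw\ge P(w_c)\,\sin(bL)/b$ for decreasing $P$ and the reverse inequality for increasing $P$, strict whenever $P$ is non-constant.

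The three bounds then follow at once. If $b\le\pi/(2L)$ then $\cos(bw)\ge0$ on $[0,L]$, so $J(b)>0$ and $b_1>\pi/(2L)$ in every dimension; this is the lower bound of (A). For $k\ge3$ the weight $P$ is decreasing and $bL\le\pi$ throughout $(0,\pi/L]$ forces $\sin(bL)\ge0$, whence $J(b)\ge P(w_c)\sin(bL)/b\ge0$, and in fact $J(b)>0$ on all of $(0,\pi/L]$; therefore $b_1>\pi/L$ and $\lambda_{\min}>-\kappa k^2/4+(\pi/\delta)^2$, which is (B). For $k=1$ the weight $P$ is increasing, so at $b=\pi/L$ (where $w_c=L/2$ and $\sin(bL)=0$) the reverse comparison gives $J(\pi/L)\le P(L/2)\cdot0=0$ with strict inequality, while $J(0)=\int_0^L P\,dw>0$; the intermediate value theorem then places a zero of $J$ in $(0,\pi/L)$, so $b_1<\pi/L$ and $\lambda_{\min}<-\kappa k^2/4+(\pi/\delta)^2$, completing the upper bound of (A). Converting the $b$-bounds to $\lambda$-bounds via $(-\kappa)b^2$ finishes every part.

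The main care, rather than difficulty, lies at the endpoints. For $k=1$ the weight $P=1/\sin\psi$ diverges as $w\to\pm L$; I would verify $P(w)\sim(L-|w|)^{-1/2}$ there (from $w+L\sim(\eta/\rho)\psi^2$ near $\psi=0$) so that $J(0)$ and the comparison integrals converge and the intermediate value step is legitimate. I would also confirm that the smallest positive zero $b_1$ corresponds to the ground state, i.e. that the associated $\varphi_\lambda$ stays positive on $[0,\delta)$, so that $b_1$ really computes $\lambda_{\min}$ and not a higher eigenvalue, and check the evenness of $P$ and the change of variables $\psi\leftrightarrow w$ directly from Lemma~\ref{l/q-differentiation-lemma} and \eqref{Three_D_Dirichlet-29}. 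The genuinely decisive point is the reduction in the first paragraph; once the problem reads $J(b)=0$, the sign-pairing of $P(w)-P(w_c)$ with $\cos(bw)$, driven entirely by the monotonicity of $\sin^{k-2}\psi$, does all the remaining work.
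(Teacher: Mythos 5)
Your proposal is correct and follows essentially the same route as the paper: both reduce the boundary condition to the vanishing of the integral in \eqref{Three_D_Dirichlet-30} at $\eta=\delta_E$, observe that $b\ln(l/q)$ ranges exactly over $[0,b\delta/\rho]$ (the content of the paper's Proposition~\ref{Maximal-position-for-Omega}), treat $k=2$ by exact evaluation of the resulting $\sin(br/\rho)$ formula, and obtain the $k=1$ upper bound by exhibiting a sign change at $b=\pi\rho/\delta$ and invoking the intermediate value theorem. The only real difference is one of packaging: where the paper integrates by parts against $d\sin^{k-2}\psi$ (formulas \eqref{Lower_Bound-22} and \eqref{Upper_Bound-3}), you exploit the same monotonicity of $\sin^{k-2}\psi$ through the second-mean-value-type pairing of $P(w)-P(w_c)$ with $\cos(bw)$, which yields identical conclusions; do make explicit, as the paper does at the outset via Chavel, that the lowest Dirichlet eigenvalue of the disc is attained by a radial eigenfunction, so that the smallest positive zero $b_1$ of $J$ indeed computes $\lambda_{\min}$.
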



\begin{proof}[Proof of theorem \ref{Lower-and-Upper-bound-Theorem}]

According to Theorem 2 of \cite{Chavel}, p.44, the lowest positive Dirichlet eigenvalue in the $n-$disk of radius $\delta$ must have a non-trivial radial eigenfunction. According to \cite{Chavel}, p.272, a non-trivial radial eigenfunction cannot vanish at the origin. Therefore, to study the smallest eigenvalue, it is enough to work only with the eigenfunctions assuming the value one at the origin. Such type of radial eigenfunctions will be used throughout the proof of the theorem.

\begin{proof}[Proof of Statement (A) of Theorem \ref{Lower-and-Upper-bound-Theorem}]

First we derive the lower bound of \eqref{Lower_Bound-2}. Recall that according to Theorem 2 of \cite{Chavel}, p.44, the lowest positive Dirichlet eigenvalue in the $n-$disk of radius $\delta$ must have a non-trivial radial eigenfunction satisfying \eqref{Lower_Bound-1}. We have already seen in \eqref{Three_D_Dirichlet-18}, p.~\pageref{Three_D_Dirichlet-18}, any radial eigenfunction assuming value one at the origin and vanishing at some finite point $r=r_0$, can be expressed as
\begin{equation}\label{Lower_Bound-5}
    \varphi_\lambda(r(\eta))
    =\frac{4\rho(\rho^2-\eta^2)^{k/2}\sigma_{k-1}}{|S^k(\rho)|}\int\limits_0^{\pi/2}
    \frac{\sin^{k-1}\psi}{l+q}\cos\left(b\ln\frac{l}{q}\right)d\psi\,,
\end{equation}
where
\begin{equation}\label{Lower_Bound-6}
    b=\pm\sqrt{\frac{\lambda}{-\kappa}-\frac{k^2}{4}}\,,\,\,\,l=|m-u^*|\,,\,\,\,q=|m-u|\,.
\end{equation}
For reader's convenience the notations $\eta, \rho, l, q, u, u^*$ are shown in Figure~\ref{Maximal-position-for-Omega-figure} below. To obtain the minimal Dirichlet Eigenvalue for problem \eqref{Lower_Bound-1} it is enough to find the minimal $\lambda$ for which $\varphi_\lambda(r(\eta))$ represented by \eqref{Lower_Bound-5} vanishes at $r=\delta$.

\begin{proposition}\label{Maximal-position-for-Omega} If
\begin{equation}\label{Lower_Bound-7}
    f(\eta,\psi)=\left| b\ln\frac{l}{q} \right|=\left| b\ln\frac{\rho^2-\eta^2}{\rho^2+\eta^2+2\rho\eta\cos(\theta(\psi))}\right|
\end{equation}
and
\begin{equation}\label{Lower_Bound-8}
    \Gamma=\{ (\eta,\psi) \mid \eta\in[0,\delta_E]\,\,\, \text{and} \,\,\,\psi\in[0,\pi/2] \}\,,
\end{equation}
then
\begin{equation}\label{Lower_Bound-9}
    \max\limits_{\Gamma}f(\eta,\psi)=\left|\frac{b\delta}{\rho}\right|\,,\quad\text{where}\,\,
    \delta=\delta_H=\rho\ln\frac{\rho+\delta_E}{\rho-\delta_E}\,,
\end{equation}
\end{proposition}
where $\eta=|Om|=r_E=\rho\tanh(r/(2\rho))$ and $\delta_E=\rho\tanh(\delta/(2\rho))$.

\begin{figure}[h]
  \center\epsfig{figure=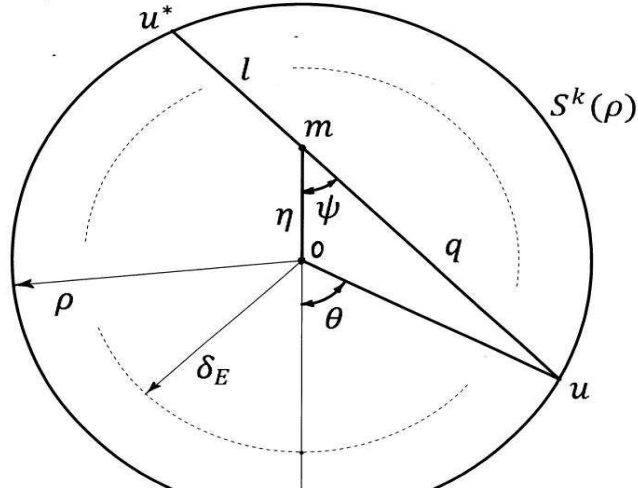, height=6.5cm, width=8.5cm}
  \caption{Euclidean variables affecting $\omega$.}\label{Maximal-position-for-Omega-figure}
\end{figure}

\begin{proof}[Proof of Proposition \ref{Maximal-position-for-Omega}]
 Notice first that
\begin{equation}\label{Lower_Bound-10}
    \frac{d}{d\theta}\left(\frac{l}{q}\right)\geq0\quad\text{for}\,\,\,(\eta, \theta)\in[0,\rho)\times[0,\pi]\,,
\end{equation}
since the denominator in \eqref{Lower_Bound-7} is a strictly decreasing function for all $\theta\in[0,\pi]$ and for $\eta\in(0,\rho)$. Thus,
\begin{equation}\label{Lower_Bound-11}
    \frac{d}{d\psi}\left(\frac{l}{q}\right)=\frac{d(l/q)}{d\theta}\frac{d\theta}{d\psi}\geq0
    \quad\text{for}\,\,\,(\eta, \psi)\in[0,\rho)\times[0,\pi]\,,
\end{equation}
since $\theta$ is increasing whenever $\psi$ is increasing or by \eqref{VarExchange-1}, p.~\pageref{VarExchange-1},
\begin{equation}\label{Lower_Bound-12}
    \frac{d\theta}{d\psi}=\frac{2q}{l+q}>0\quad\text{for all}
    \,\,\,\psi\in[0,\pi]\,\,\,\text{and}\,\,\,\eta\in[0,\rho)\,.
\end{equation}
Therefore, taking into account that
\begin{equation}\label{Lower_Bound-13}
    \frac{l}{q}<1\quad\text{for}\,\,\,(\eta,\psi)\in(0,\rho)\times[0,\pi/2)\,,
\end{equation}
we have
\begin{equation}\label{Lower_Bound-14}
    \frac{df(\eta,\psi)}{d\psi}=\frac{d}{d\psi}\left|b\ln\frac{l}{q}\right|\leq0\quad
    \text{for all}\,\,\,(\eta,\psi)\in[0,\rho)\times[0,\pi/2]\,.
\end{equation}
Thus, for any $\eta\in[0,\rho)$,
\begin{equation}\label{Lower_Bound-15}
    \max\limits_{\psi\in[0,\pi/2]}f(\eta,\psi)=f(\eta,0)=
    \left|\frac{b}{\rho}\cdot\rho\ln\frac{\rho+\eta}{\rho-\eta}\right|=\left|\frac{br}{\rho}\right|\,.
\end{equation}
Hence,
\begin{equation}\label{Lower_Bound-16}
    \max\limits_{\Gamma}f(\eta,\psi)=\max\limits_{\eta\in[0,\delta_E]}f(\eta,0)=
    \max\limits_{\eta\in[0,\delta_E]} \left|\frac{b\cdot r(\eta)}{\rho}\right|
    =\left|\frac{b\delta}{\rho}\right|\,,
\end{equation}
which completes the proof of Proposition~\ref{Maximal-position-for-Omega}.
\end{proof}

Using Proposition \ref{Maximal-position-for-Omega} above, we conclude that
\begin{equation}\label{Lower_Bound-17}
    \left|\frac{b\delta}{\rho}\right|\leq\frac{\pi}{2}\quad\text{implies}\quad
    \left|b\ln\frac{l}{q}\right|_{\Gamma}\leq\frac{\pi}{2}\,,
\end{equation}
which means that the integrand in \eqref{Lower_Bound-5} remains non-negative for all $(\eta,\psi)\in\Gamma$, and therefore, $\varphi_\lambda(r)$ remains positive for all $r\in[0, \delta]$. Thus, because of \eqref{Lower_Bound-17}, the boundary condition in the Dirichlet Eigenvalue Problem \eqref{Lower_Bound-1} will fail. Conversely, if $\varphi(\delta)=0$, then the first inequality in \eqref{Lower_Bound-17} must fail, i.e,
\begin{equation}\label{Lower_Bound-18}
    \left|\frac{b\delta}{\rho}\right|=
    \frac{\delta}{\rho}\sqrt{\frac{\lambda}{-\kappa}-\frac{k^2}{4}}>\frac{\pi}{2},\quad
    \text{with}\,\,\,\kappa=-\frac{1}{\rho^2}
\end{equation}
or, equivalently,
\begin{equation}\label{Lower_Bound-19}
    \lambda>-\kappa\frac{k^2}{4}+\left(\frac{\pi}{2\delta}\right)^2
\end{equation}
is a necessary condition for $\varphi_\lambda(\delta)=0$. This completes the proof of the lower bound in Statement (A). Note that we did not use the assumption that $k=1$ and therefore, this result holds for all $k\geq1$. On the other hand, for $k>1$ we have much stronger statements given in (B) and (C) of Theorem~\ref{Lower-and-Upper-bound-Theorem}, p.~\pageref{Lower-and-Upper-bound-Theorem}. To obtain the upper bound in \eqref{Lower_Bound-2} of the Statement (A), the assumption $k=1$ is essential. Fix $k=1$, $\eta=\delta_E$ and recall that $\sigma_0=2$. Then, from \eqref{Lower_Bound-5}, the value of radial eigenfunction at $r=\delta$ or, equivalently at $r_E=\delta_E$ can be written as
\begin{equation}\label{Upper_Bound-1}
    \varphi_{\lambda}(\delta)=\frac{4\rho}{\pi}(\rho^2-\delta_E^2)^{1/2}\cdot
    \int\limits_0^{\pi/2}\frac{1}{l+q}\cos\left(b\ln\frac{l}{q}\right)d\psi\,.
\end{equation}
Recall from \eqref{Substitution-for-d-psi}, p.~\pageref{Substitution-for-d-psi} that
\begin{equation}\label{Upper_Bound-2-1}
    d\psi=\frac{l+q}{4\eta b\sin\psi}\,\,d\left(b\ln\frac{l}{q}\right)\,.
\end{equation}
If we use the substitution \eqref{Upper_Bound-2-1} for $d\psi$, then the integration by parts applied to \eqref{Upper_Bound-1} and the fact that $l=q$ for $\psi=\pi/2$ yield the following expression for $\varphi_\lambda(\delta)=\varphi(\delta, b^*(\lambda))$.
\begin{equation}\label{Upper_Bound-3}
    \varphi(\delta, b^*(\lambda))= W\left[ \left. \frac{\sin(b^*\ln q/l)}{\sin\psi} \right|_{\psi\rightarrow0}-
    \int\limits_0^{\pi/2}\frac{\cos\psi}{\sin^2\psi}\sin\left( b^*\ln\frac{q}{l}\right)d\psi \right]\,,
\end{equation}
where
\begin{equation}\label{Upper_Bound-4}
    W=\frac{\rho(\rho^2-\delta_E^2)^{1/2}}{\pi\delta_E b^*}\quad\text{and}\quad b^*=|b|=\sqrt{\frac{\lambda}{-\kappa}-\frac{k^2}{4}}\,.
\end{equation}
We are looking for $b_0$ for which $\varphi(\delta, b_0(\lambda_{\min}))=0$, where $\lambda_{\min}$ denotes the minimal eigenvalue. As we saw in the proof of the lower bound, see \eqref{Lower_Bound-17},
\begin{equation}\label{Upper_Bound-5}
    b_1^*\leq\frac{\pi\rho}{2\delta}\quad\text{implies}\quad\varphi(\delta, b_1^*(\lambda_1))>0\,.
\end{equation}
Our goal now is to find $b_2^*>b_1^*$ such that $\varphi(\delta, b_2^*(\lambda_2))<0$. It will follow that  $b_1^*<b_0^*<b_2^*$, since, according to \eqref{Upper_Bound-1} $\varphi(\delta, b^*)$ is $\mathbf{C}^{\infty}$ with respect to $b^*$. For the next step we need the following proposition.

\begin{proposition}\label{EF-runs-under-zero}
\begin{equation}\label{Upper_Bound-6}
    \text{If}\quad b_2^*=\frac{\pi\rho}{\delta}\,,\quad\text{then}\quad\varphi(\delta,b_2^*)
    =\varphi\left(\delta, \frac{\pi\rho}{\delta}\right)<0\,.
\end{equation}
\end{proposition}

\begin{proof}[Proof of Proposition~\ref{EF-runs-under-zero}]

We are going to use formula \eqref{Upper_Bound-3}. Notice that
\begin{equation}\label{Upper_Bound-7}
    \left. \left(b_2^*\ln\frac{q}{l}\right)\right|_{\eta=\delta_E}=
    \frac{\pi\rho}{\delta}\ln\frac{\rho+\delta_E}{\rho-\delta_E}=\pi\,.
\end{equation}
and then, we apply the L'H\^opital's rule to compute all necessary limits in \eqref{Upper_Bound-3}. Recall from \eqref{Upper_Bound-2-1} that
\begin{equation}\label{Upper_Bound-8}
    \left. \frac{d(b^*\ln l/q)}{d\psi}\right|_{\eta=\delta_E}=\frac{4\delta_E b^*\sin\psi}{l+q}\,.
\end{equation}
So, the L'H\^opital's rule together with \eqref{Upper_Bound-8} yields
\begin{equation}\label{Upper_Bound-9}
    \left.\lim\limits_{\psi\rightarrow0}\frac{\sin(b_2^*\ln l/q)}{\sin\psi}\right|_{\eta=\delta_E}=
    \lim\limits_{\psi\rightarrow0}\frac{4\delta_E b_2^*}{l+q}\frac{\cos(b_2^*\ln l/q)\sin\psi}{\cos\psi}=0\,.
\end{equation}
Therefore,
\begin{equation}\label{Upper_Bound-10}
    \varphi(\delta, b_2^*)=-\frac{\rho(\rho^2-\delta_E^2)^{1/2}}{\delta_E\pi b_2^*}
    \int\limits_0^{\pi/2}\frac{\cos\psi}{\sin^2\psi}\sin\left(b_2^*\ln\frac{q}{l}\right)d\psi\,.
\end{equation}
The integral in \eqref{Upper_Bound-10} is well defined, since again, using \eqref{Upper_Bound-8} and the L'H\^opital's rule, we have:
\begin{equation}\label{Upper_Bound-11}
    \left.\lim\limits_{\psi\rightarrow0}\frac{\sin(b_2^*\ln q/l)}{\sin^2\psi}\right|_{\eta=\delta_E}
    =\frac{\pi\delta_E}{\delta}<\infty\,.
\end{equation}
We also see that the integral in \eqref{Upper_Bound-10} is positive, since $q/l$ is a decreasing function, such that
\begin{equation}\label{Upper_Bound-12}
    \frac{\rho+\delta_E}{\rho-\delta_E}=\left.\frac{q}{l}\right|_{\psi=0}\geq\frac{q(\psi)}{l(\psi)}
    \geq\left.\frac{q}{l}\right|_{\psi=\pi/2}=1\,,
\end{equation}
while $0\leq\psi\leq\pi/2$ and $\eta=\delta_E$. Hence,
\begin{equation}\label{Upper_Bound-13}
    b_2^*\ln\frac{\rho+\delta_E}{\rho-\delta_E}=b_2^*\frac{\delta}{\rho}
    =\pi\geq b_2^*\ln\frac{q}{l}\geq0\,,
\end{equation}
which implies that
\begin{equation}\label{Upper_Bound-14}
    \sin\left(b_2^*\ln\frac{q}{l}\right)\geq0\quad\text{for}\quad\psi\in\left[0,\frac{\pi}{2}\right]\,.
\end{equation}
Therefore, the integrand in \eqref{Upper_Bound-10} is strictly positive for all $\psi\in(0, \pi/2)$, and then, $\varphi(\delta, b_2^*)<0$. This completes the proof of the Proposition~\ref{EF-runs-under-zero}.
\end{proof}

Note now that if we choose $b_1^*=\pi\rho/(2\delta)$, then the claim of the Proposition~\ref{EF-runs-under-zero} together with \eqref{Upper_Bound-5} implies that
\begin{equation}\label{Upper_Bound-15}
    b_1^*(\lambda_1)<b_0^*(\lambda_{\min})<b_2^*(\lambda_2)\,,
\end{equation}
where $b^*$ and $\lambda$ are related as usual, by the following formula
\begin{equation}\label{Upper_Bound-16}
    b^*(\lambda)=\sqrt{\frac{\lambda}{-\kappa}-\frac{k^2}{4}}
\end{equation}
and $\lambda_{\min}$ is the minimal eigenvalue, such that
$\varphi(\delta, b_0^*(\lambda_{\min}))=0$.
Therefore,
\begin{equation}\label{Upper_Bound-17}
   \frac{\pi\rho}{2\delta}=b_1^*<b_0^*(\lambda_{\min})=
   \sqrt{\frac{\lambda_{\min}}{-\kappa}-\frac{k^2}{4}}<b_2^*=\frac{\pi\rho}{\delta}\,,
\end{equation}
which leads exactly to what we need, i.e.,
\begin{equation}\label{Upper_Bound-18}
    -\kappa\frac{k^2}{4}+\left(\frac{\pi}{2\delta}\right)^2<\lambda_{\min}<
    -\kappa\frac{k^2}{4}+\left(\frac{\pi}{\delta}\right)^2\,,
\end{equation}
and then, the proof of Statement (A) of Theorem~\ref{Lower-and-Upper-bound-Theorem} is complete.
\end{proof}

\begin{proof}[Proof of Statement (B) of Theorem \ref{Lower-and-Upper-bound-Theorem}]

We shall see here that using the integration by parts we can improve the inequality \eqref{Lower_Bound-19} for $k\geq3$. Recall again from \eqref{Lower_Bound-5}, p.~\pageref{Lower_Bound-5} that a vanishing radial eigenfunction can be expressed as
\begin{equation}\label{Lower_Bound-20}
    \varphi_\lambda(r(\eta))
    =\frac{4\rho(\rho^2-\eta^2)^{k/2}\sigma_{k-1}}{|S^k(\rho)|}\int\limits_0^{\pi/2}
    \frac{\sin^{k-1}\psi}{l+q}\cos\left(b\ln\frac{l}{q}\right)d\psi
\end{equation}
and by \eqref{Substitution-for-d-psi}, p.~\pageref{Substitution-for-d-psi} that
\begin{equation}\label{Lower_Bound-21}
    d\psi=\frac{l+q}{4\eta b\sin\psi}\cdot d\left( b\ln\frac{l}{q}\right)\,.
\end{equation}
If we use the substitution \eqref{Lower_Bound-21} for $d\psi$, then the integration by parts applied to \eqref{Lower_Bound-20} and the fact that $l=q$ for $\psi=\pi/2$ yield the following expression for $\varphi_{\lambda}(r(\eta))$.
\begin{equation}\label{Lower_Bound-22}
    \varphi_\lambda(r(\eta))=\frac{\rho(\rho^2-\eta^2)^{k/2}\sigma_{k-1}}{\eta|S^k(\rho)|\cdot|b|}
    \int\limits_{\psi=0}^{\psi=\pi/2}\sin\left(|b|\ln\frac{q}{l}\right)d\sin^{k-2}\psi\,.
\end{equation}
Notice that $q\geq l$ for $(\eta,\psi)\in[0,\rho)\times[0,\pi/2]$ and $\sin\psi$ is strictly increasing function for $0<\psi<\pi/2$. Hence, the integrand in \eqref{Lower_Bound-22} remains non-negative as long as
\begin{equation}\label{Lower_Bound-23}
    0\leq|b|\ln\frac{q}{l}=\left|b\ln\frac{q}{l}\right|\leq\pi\,.
\end{equation}
Recall from \eqref{Lower_Bound-9}, p.~\pageref{Lower_Bound-9} that
\begin{equation}\label{Lower_Bound-24}
    \max\limits_{\Gamma}\left|b\ln\frac{q}{l}\right|=\left|\frac{b\delta}{\rho}\right|\,,
\end{equation}
where
\begin{equation}\label{Lower_Bound-25}
    \Gamma=\{ (\eta,\psi) \mid \eta\in[0,\delta_E]\,\,\, \text{and} \,\,\,\psi\in[0,\pi/2] \}\,.
\end{equation}
Therefore,
\begin{equation}\label{Lower_Bound-26}
    \left|\frac{b\delta}{\rho}\right|\leq\pi\quad\text{implies}\quad\left|b\ln\frac{q}{l}\right|_{\Gamma}\leq\pi\,.
\end{equation}
Then, using the integral representation \eqref{Lower_Bound-22}, we conclude that $\varphi_\lambda(r)>0$ for all $r\in[0,\delta]$. This means that the boundary condition in the Dirichlet Eigenvalue Problem \eqref{Lower_Bound-1}, p.~\pageref{Lower_Bound-1}, can not be satisfied. Thus, finally,
\begin{equation}\label{Lower_Bound-27}
    \left|\frac{b\delta}{\rho}\right|=\frac{\delta}{\rho}\sqrt{\frac{\lambda}{-\kappa}-\frac{k^2}{4}}>\pi
    \quad\text{with}\,\,\,\kappa=-\frac{1}{\rho^2}\,,
\end{equation}
or, equivalently,
\begin{equation}\label{Lower_Bound-28}
    \lambda>-\kappa\frac{k^2}{4}+\left(\frac{\pi}{\delta}\right)^2
\end{equation}
is necessary condition for $\varphi_\lambda(\delta)=0$ to hold. This completes the proof of Statement~(B) of Theorem~\ref{Lower-and-Upper-bound-Theorem}.
\end{proof}

\begin{proof}[Proof of Statement (C) of Theorem \ref{Lower-and-Upper-bound-Theorem}.]
	Let $k=2$. Again, if we use the substitution \eqref{Lower_Bound-21} for $d\psi$, then the integration by parts applied to \eqref{Lower_Bound-20} and the fact that $l=q$ for $\psi=\pi/2$ yield the following expression for $\varphi_{\lambda}(r(\eta))$.
\begin{equation}\label{exact 3-d solution}
    \varphi_\lambda(r(\eta))=\frac{\rho^2-\eta^2}{2\eta\rho |b|}
    \cdot\sin\left(\frac{|b|r}{\rho}\right)\,,
\end{equation}
where
\begin{equation}
		r=\rho\ln\frac{\rho+\eta}{\rho-\eta}\,,\quad
		|b|=\sqrt{-\frac{\lambda}{\kappa}-1}\quad\text{and}\quad
		\kappa=-\frac{1}{\rho^2}\,.
\end{equation}
Therefore, to satisfy the equation $\varphi_\lambda(\delta)=0$, we have to set
$$\frac{\delta}{\rho}\sqrt{\frac{\lambda_j}{-\kappa}-1}=\pi j\,,\quad j=1,2,3,\cdots\,,$$
which leads to~\eqref{Three_D_Dirichlet-32} and to \eqref{Lower_Bound-3}. Note that in this case $b_j=\pi\rho j/\delta$ and~\eqref{Expli-Diri-Solu-3-D} follows. This completes the proof of statement (C).
\end{proof}

The proof of Theorem~\ref{Lower-and-Upper-bound-Theorem} is now complete as well.
\end{proof}

Now we are ready to prove theorem~\ref{Arbitrary-Domain-Estimation-Thm} from page~\pageref{Arbitrary-Domain-Estimation-Thm}.

\begin{proof}[Proof of Theorem\,\,\ref{Arbitrary-Domain-Estimation-Thm}]
        \label{Proof-of-Arbitrary-Estimation}
    By the Rayleigh's Theorem in \cite{Chavel}, p.~16, the relationship \eqref{Disc-Manifold-Inclusion-Relationship}, p.~\pageref{Disc-Manifold-Inclusion-Relationship} yields
    \begin{equation}\label{Eigenvalue-Disc-Manifold-Inequalities}
        \lambda_{\min}(D^n_2)\leq\lambda_{\min}(M^n)\leq\lambda_{\min}(D^n_1)\,.
    \end{equation}
    For $n>3$, using \eqref{Lower_Bound-4} from p.~\pageref{Lower_Bound-4} together with \eqref{Eigenvalue-Disc-Manifold-Inequalities}, we have
    \begin{equation}
        -\kappa\frac{(n-1)^2}{4}+\left(\frac{2\pi}{d_2}\right)^2\leq
        \lambda_{\min}(D^n_2)\leq\lambda_{\min}(M^n)\,,
    \end{equation}
    which completes the proof of Statement~(C) of Theorem~\ref{Arbitrary-Domain-Estimation-Thm}, p.~\pageref{Arbitrary-Domain-Estimation-Thm}.

    The similar argument together with formulae~\eqref{Lower_Bound-2}, \eqref{Lower_Bound-3}, p.~\pageref{Lower_Bound-2} and~\eqref{Eigenvalue-Disc-Manifold-Inequalities} yields directly Statements (A) and (B) of Theorem~\ref{Arbitrary-Domain-Estimation-Thm}, p.~\pageref{Arbitrary-Domain-Estimation-Thm}. This completes the proof of Theorem~\ref{Arbitrary-Domain-Estimation-Thm}.
\end{proof}


\bibliographystyle{alpha}

\end{spacing}

\def\changemargin#1#2{\list{}{\rightmargin#2\leftmargin#1}\item[]}
\let\endchangemargin=\endlist

\begin{changemargin}{10.5cm}{0cm}

Department of Mathematics \\Central Connecticut State University \\New Britain, Connecticut 06050 \\E-mail address: s.artamoshin@ccsu.edu

\end{changemargin}


\end{document}